\documentclass[12pt]{amsart}

\usepackage{fullpage}
\usepackage{amsmath}
\usepackage{amsfonts}
\usepackage{amssymb}
\usepackage{amsthm}

\usepackage{color}
\usepackage{xcolor}

\newtheorem{theorem}{Theorem}[section]
\newtheorem{lemma}[theorem]{Lemma}
\newtheorem{corollary}[theorem]{Corollary}
\newtheorem{proposition}[theorem]{Proposition}

\theoremstyle{definition}
\newtheorem{definition}[theorem]{Definition}

\theoremstyle{remark}
\newtheorem{remark}[theorem]{Remark}

\numberwithin{equation}{section}

\begin{document}

\title{Flat structure of meromorphic connections on Riemann surfaces}

\author{Karim Rakhimov}

\address{Department of  Mathematics, University of Pisa, Pisa, Italy, 56127}

\curraddr{V.I. Romanovskiy Institute of Mathematics of Uzbek Academy of Sciences,  Tashkent, Uzbekistan}
\email{karimjon1705@gmail.com}

\subjclass[2010]{32H50, 34M03, 34M40, 37F99}
\date{}

\keywords{Meromorphic connections, Quadratic differentials, Meromorphic flat surfaces, Poincar\'e-Bendixson theorem}

\begin{abstract}
The possible omega limit sets of simple geodesics for meromorphic connections on compact Riemann surfaces have been studied by Abate, Tovena and Bianchi. In this paper we study the same problem for infinite self-intersecting geodesics.  In the first part of the paper we study relation among meromorphic $k$-differentials, singular flat metrics and meromorphic connections. 
Moreover, we prove  a Poincar\'e-Bendixson theorem for infinite self-intersecting geodesics of meromorphic connections with monodromy in $G$, where $\arg G^k=\{0\}$ for some $k\in\mathbb{N}$.
\end{abstract}

\maketitle
\section{Introduction}

 Meromorphic connections on Riemann surfaces have been well studied by many authors (see for example \cite{IYa}). A \textit{meromorphic connection} on a Riemann surface $S$ is a $\mathbb{C}-$linear operator $\nabla:\mathcal{M}_{TS}\to \mathcal{M}_S^1 \otimes\mathcal{M}_{TS}$, where $\mathcal{M}_{TS}$ is the sheaf of germs of meromorphic sections of the tangent bundle $TS$ and $\mathcal{M}_S^1$ is the space of meromorphic 1-forms on $S$, satisfying the Leibniz rule
$\nabla(fs)=\text{d}f\otimes s +f\nabla s$
for all $s\in \mathcal{M}_{TS}$ and $f\in \mathcal{M}_{S}$. A \textit{geodesic} for a meromorphic connection $\nabla$ is a real smooth curve $\sigma:I\to S^o$, where $I\subseteq\mathbb{R}$ is an interval and $S^o$ is the complement of the poles of $\nabla$ in $S$, satisfying the geodesic equation $\nabla_{\sigma'}\sigma'\equiv0$. To the best of our knowledge, geodesics for meromorphic connections in this sense were first introduced in \cite{AT1}. In \cite{AT1}, the authors discovered that there is a strong relationship
between the local dynamics of the time-one map of homogeneous vector fields and the dynamics of geodesics for meromorphic connections on Riemann surfaces.
To describe this relationship,
we need to introduce a few notations and definitions.

Let $Q$ be a homogeneous vector field on $\mathbb{C}^n$ of degree $\nu+1\ge2$. First of all, notice that
the orbits of its time-1 map are contained in the real integral curves of $Q$; so we are interested
in studying the dynamics of the real integral curves of the complex homogeneous vector
field $Q$.

A \emph{characteristic direction} for $Q$ is a direction $v\in \mathbb{P}^{n-1}(\mathbb{C})$ such that the complex line
(the characteristic leaf) issuing from the origin in the direction $v$ is $Q$-invariant. An integral
curve issuing from a point of a characteristic leaf stays in that leaf forever; so the dynamics
in a characteristic leaf is one-dimensional, and thus completely known. In particular, if the
vector field $Q$ is a multiple of the radial field (we shall say that $Q$ is \emph{dicritical}) every direction
is characteristic; thus the dynamics is one-dimensional and completely understood. So, we are
mainly interested in understanding the dynamics of integral curves outside the characteristic
leaves of non-dicritical vector fields. In \cite{AT1} Abate and Tovena proved the following result:
\begin{theorem}[{Abate and Tovena \cite{AT1}}]\label{abatetovenathqtom}
  Let $Q$ be a non-dicritical homogeneous vector field of degree $\nu+1\ge 2$ in $\mathbb{C}^n$ and let $M_Q$ be the complement in $\mathbb{C}^n$ of the characteristic leaves of $Q$. Let $[\cdot]:\mathbb{C}^n\setminus\{O\}\to\mathbb{P}^{n-1}(\mathbb{C})$ denote the canonical projection. Then there exists a singular holomorphic foliation $\mathcal{F}$ of $\mathbb{P}^{n-1}(\mathbb{C})$ in Riemann surfaces, and a partial meromorphic
connection $\nabla$ inducing a meromorphic connection on each leaf of $\mathcal{F}$, whose poles coincide
with the characteristic directions of $Q$, such that the following holds:
\begin{itemize}
  \item if $\gamma: I \to M_Q$ is an integral curve of $Q$ then the image of $[\gamma]$ is contained in a leaf $S$
of $\mathcal{F}$ and it is a geodesic for $\nabla$ in $S$;
  \item conversely, if $\sigma : I\to \mathbb{P}^{n-1}(\mathbb{C})$ is a geodesic for $\nabla$ in a leaf $S$ of $\mathcal{F}$ then there are
exactly $\nu$ integral curves $\gamma_1,...,\gamma_\nu : I \to M_Q$ such that $[\gamma_j] = \sigma$ for $j = 1,...,\nu.$
\end{itemize}

\end{theorem}

Due to this result, we see that the study of integral curves for a homogeneous vector
field in $\mathbb{C}^n$ is reduced to the study of geodesics for meromorphic connections on a Riemann surface $S$.

In \cite{AT1}, Abate and Tovena studied the $\omega$-limit sets of the geodesics of meromorphic connections on $\mathbb{P}^1(\mathbb{C})$. They gave complete classification of the $\omega$-limit sets of simple geodesics. Later, in \cite{AB}, Abate and Bianchi studied the same problem for any compact Riemann surface $S$ and they proved Poincaré-Bendixson theorems for simple geodesics. These works, however, left open the following question: ``what happens if a geodesic intersects itself infinitely many times?". For simplicity up to now if there is no confusion when we say ``self-intersecting geodesics" we mean ``infinite self-intersecting geodesics".

In the first part of this work we study the relation between meromorphic connections and meromorphic $k$-differentials. In the second part of this work we study self-intersecting geodesics of a class of meromorphic connections. Before stating the main results of the paper we need to introduce a few notations and definitions.

Let $\nabla$ be a meromorphic connection on a Riemann surface $S$.
Let $\{(U_\alpha,z_\alpha)\}$ be an atlas for $S$. It is not difficult to see that there exists $\eta_\alpha\in \mathcal{M}_S^1(U_\alpha)$ on $U_\alpha$ such that $\nabla\left(\frac{\partial}{\partial z_\alpha}\right)=\eta_\alpha\otimes \frac{\partial}{\partial z_\alpha}$, where $\frac{\partial}{\partial z_\alpha}$ is the induced local generator of $TS$ over $U_\alpha$. We shall say $\eta_\alpha$ is the \emph{local representation} of $\nabla$ on $U_\alpha$. The \emph{residue} $\mathrm{Res}_p\nabla$ of $\nabla$ at a point $p\in S$ is the residue of any 1-form $\eta_\alpha$ representing $\nabla$ on a local chart $(U_\alpha,z_\alpha)$ at $p$. A pole $p$  is said to be \emph{Fuchsian} if the local representation  of $\nabla$ on one (and hence any) chart $(U,z)$ around $p$ has a simple pole at $p$. If all poles of $\nabla$ are Fuchsian then we shall say $\nabla$ is \emph{Fuchsian}. Let $S^o$ be the complement of the poles of $\nabla$ and $G$ a multiplicative subgroup of $\mathbb{C}^*$.  We say $\nabla$ has \emph{monodromy in} $G$ if there exists an atlas $\{(U_\alpha,z_\alpha)\}$ for $S^o$ such that the representations of $\nabla$ on $U_\alpha$'s are identically zero and the transition functions are of the form $z_\beta= a_{\alpha\beta} z_\alpha+c_{\alpha\beta}$ on $U_\alpha\cap U_\beta$, with $a_{\alpha\beta}\in G$ and $c_{\alpha\beta}\in\mathbb{C}$. We say $\nabla$ has \emph{real periods} if $G\subseteq S^1$.

To state the first main result we need to introduce the notion of meromorphic $k$-differentials. A \emph{meromorphic $k$-differential} $\phi$ on a Riemann surface $S$ is a meromorphic section of the $k$-th power of the canonical line bundle. The zeroes
and the poles of $\phi$ constitute the set $\Sigma$ of \emph{critical points} of $\phi$. It is not difficult to see that a $k$-differential $\phi$ is given locally as $\phi=\phi(z)dz^k$ (for more details see Definition \ref{kdiftarifi}). Then there is  a metric $g$ locally given as $g^{\frac{1}{2}}=|\phi(z)|^{\frac{1}{k}}|dz|$. In particular, $g$ is a flat metric on $S^o:=S\setminus\Sigma$ (see Proposition \ref{qdsfm}). Finally, a smooth curve $\sigma:[0,\varepsilon)\to S^o$ is a \emph{geodesic} for $\phi$ if it is a geodesic for $g$. When $k=1$  we get the meromorphic Abelian differential which have been thoroughly studied both from a geometrical and a dynamical point of view. Theory of translation surfaces (corresponding to Abelian differentials) provides new insights in the study of the dynamics of billiards through the methods
of algebraic geometry and renormalization theory. When $k=2$ we get the meromorphic quadratic differential  which are a well studied subject in Teichm{\"u}ller theory. Extracting the $k$-th root, one can think of an arbitrary $k$-differential as a multi-valued meromorphic Abelian differential on $S$. In general, differentials of order $k>2$ are much less studied than their quadratic counterpart.

 We say a meromorphic $k$-differential $\phi$ and a meromorphic connection $\nabla$ are \emph{adapted} to each other if they have the same geodesics and the same critical points.  Our first main theorem describes an exact relation between meromorphic $k$-differentials and meromorphic connections.
\begin{theorem}\label{kvamermain}
  Let $\nabla$ be a Fuchsian meromorphic connection on a Riemann surface $S$. If $\nabla$ has monodromy in $\mathbb{Z}_k$  and residues in $\frac{1}{k}\mathbb{Z}$ then there is a meromorphic $k$-differential $\phi$ adapted to $\nabla$  (here we identify $\mathbb{Z}_k$ with the multiplicative group of $k$-th roots of unity). Moreover, $\phi$ is  unique up to multiplication by a nonzero constant factor.

  On the other hand, if $\phi$ is a meromorphic $k$-differential on a Riemann surface $S$ then there exists a unique meromorphic connection $\nabla$ adapted to $\phi$. Moreover, $\nabla$ is Fuchsian  and it has monodromy in $\mathbb{Z}_k$ and residues in $\frac{1}{k}\mathbb{Z}$.
\end{theorem}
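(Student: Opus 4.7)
The plan is to use atlases of distinguished charts on $S^o$ whose transitions have the affine form $z_\beta = a_{\alpha\beta} z_\alpha + c_{\alpha\beta}$ with $a_{\alpha\beta}^k = 1$: such an atlas simultaneously trivialises a meromorphic connection with monodromy in $\mathbb{Z}_k$ and renders the tautological form $dz_\alpha^k$ well defined, since the $k$-th power annihilates the $\mathbb{Z}_k$ factor. For the implication $\nabla \Rightarrow q$, take the atlas $\{(U_\alpha, z_\alpha)\}$ on $S^o$ furnished by the monodromy hypothesis and set $q := dz_\alpha^k$ chart by chart. At a Fuchsian pole $p$ of $\nabla$ with residue $a = m/k$, write $z_\alpha = f(w)$ for a local coordinate $w$ centred at $p$; the transformation rule $\eta_w = d\log f'(w)$ combined with the Fuchsian normal form forces $f'(w) = C w^{a} g(w)$ for some holomorphic non-vanishing germ $g$, so that $q = C^k w^{m} g(w)^k dw^k$ is single-valued near $p$ (the multi-valued factor $e^{2\pi i a}$ of $f'$ is killed by the $k$-th power) and extends to a meromorphic $k$-differential on $S$ with a zero or pole of order $m$ at $p$.

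For the converse, given $q$, on each simply connected open subset of $S^o := S \setminus \Sigma$ fix a holomorphic branch $h$ of $q^{1/k}$ and define the distinguished coordinate $w := \int h\, dz$, so that $q = dw^k$ locally. Two such coordinates $w, \tilde w$ satisfy $d\tilde w^k = dw^k$, forcing the affine relation $\tilde w = \zeta w + c$ with $\zeta \in \mathbb{Z}_k$; declaring $\nabla$ to be trivial in every $w$-chart thus produces a holomorphic connection on $S^o$ with monodromy in $\mathbb{Z}_k$ whose geodesics (the affine lines in $w$) coincide with the geodesics of the flat metric $g^{1/2} = |q|^{1/k}|dz| = |dw|$ associated to $q$. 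To extend $\nabla$ across a critical point $p$ of $q$ of order $m$, compute in a local coordinate $z$ centred at $p$:
\[
\eta \;=\; d\log(dw/dz) \;=\; \tfrac{1}{k}\, d\log q \;=\; \frac{m}{kz}\, dz + (\text{holomorphic}),
\]
which is meromorphic with simple pole and residue $m/k \in \tfrac{1}{k}\mathbb{Z}$; thus $\nabla$ extends to a Fuchsian connection on $S$ with the claimed monodromy and residues.

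Uniqueness in both directions reduces to the following rigidity: in any flat $z_\alpha$-chart the only conformal multiples of $|dz_\alpha|^2$ whose geodesics contain all the affine lines are the constant multiples, so any $k$-differential adapted to $\nabla$ is chart-wise a constant multiple of $dz_\alpha^k$, and since $S^o$ is connected the resulting locally constant ratio $q_2/q_1$ is a global non-zero scalar (the analogous argument in $w$-charts gives uniqueness of $\nabla$). The main technical obstacle is the meromorphic extension across the critical points: in the first direction the hypothesis $\operatorname{Res}_p \nabla \in \tfrac{1}{k}\mathbb{Z}$ is precisely what makes the exponent $ka$ an integer --- hence $w^{ka}$ single-valued and $q$ meromorphic --- and without this assumption the construction would only yield a multi-valued $k$-differential on $S^o$; in the second direction the corresponding step is the elementary but unavoidable logarithmic derivative computation displayed above.
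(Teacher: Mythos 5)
Your argument is correct and follows essentially the same route as the paper: both sides of the correspondence are built on $S^o$ via an affine $\mathbb{Z}_k$-atlas of distinguished flat coordinates (in which $q_\alpha\equiv\mathrm{const}$ and $\eta_\alpha\equiv 0$), and the extension across the Fuchsian poles rests on the identity $\eta=\tfrac{1}{k}\,d\log q$ together with the integrality of $k\,\mathrm{Res}_p\nabla=m$, exactly as in the paper. The only cosmetic difference is that the paper channels the regular part and the uniqueness statements through the adapted singular flat metric (Theorems \ref{t4} and \ref{t8}), whereas you invoke geodesic rigidity of the flat charts directly; the underlying computations coincide.
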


  In Section \ref{singularflatm} we study relation between meromorphic connections and singular flat metrics. In particular, we show that any Fuchsian meromorphic connection with real residues and real periods  counterpart to a  singular flat metric, and vice versa.  The proof of Theorem \ref{abatetovenathqtom} suggests us to study also meromorphic connections with pure complex residues. Since the monodromy around a pole is contained in the monodromy of the structure a meromorphic connection with pure complex residue has no real periods, i.e. monodromy in $G\not\subset S^1$.  The next main result (see  Subsection \ref{secpoincarebend} for the terminology) is a classifications of the $\omega$-limit sets of self-intersecting geodesics of meromorphic connections having monodromy in $G$, with $\arg G^k=\{0\}$, and for $A\subset \mathbb{C}^*$, we denote $\arg A:=\{\arg a:a\in A\}$.

\begin{theorem}\label{abateconiK}

Let $\nabla$ be a meromorphic connection on a compact Riemann surface $S$ and $\Sigma$ the set of poles of $\nabla$. Set $S^o=S\setminus\Sigma$.  Let  $\sigma:[0,\varepsilon)\to S^o$ be a maximal geodesic of $\nabla$. Assume $\nabla$ has monodromy in $G$ with $\arg G^k=\{0\}$ for some $k\in \mathbb{N}$. If $\sigma$ is a self-intersecting geodesic, then one of the following propositions holds:

\begin{enumerate}

  \item the $\omega$-limit set of $\sigma$ in $S$ is given by the support of a (possibly non-simple) closed geodesic;
  \item the $\omega$-limit set of $\sigma$ in $S$ is a graph of (possibly self-intersecting) saddle connections;

  \item  the $\omega$-limit set of $\sigma$  has non-empty interior and non-empty boundary, and each component of its boundary is a graph of geodesics such that every geodesic segment is a part of a saddle connection;
   \item the $\omega$-limit set of $\sigma$ in $S$ is all of $S$.
\end{enumerate}

\end{theorem}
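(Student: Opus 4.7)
The strategy is to transport the problem into the language of singular flat surfaces via the correspondence developed in the first part of the paper (in the spirit of Theorem~\ref{kvamermain}) and then run a Poincar\'e--Bendixson argument adapted to the presence of self-intersections. As a first step, I associate to $\nabla$ an adapted meromorphic $k$-differential (or, more precisely, a meromorphic $G$-differential, using the theory of $G$-differentials developed earlier in the paper). The hypothesis $\arg G^k=\{0\}$ places $G^k$ inside $\mathbb{R}^+$, which is exactly what is needed for the density $|q(z)|^{1/k}|dz|$ to be intrinsically defined on $S^o$ and to yield a singular flat metric $g$. Under this correspondence, $\nabla$-geodesics coincide with $g$-geodesics, poles of $\nabla$ become conical singularities of $g$ (of finite or infinite angle), and $\omega(\sigma)$ becomes the $\omega$-limit set of a straight-line trajectory on the compact flat surface $(S,g)$.

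Next, I pass to a canonical branched cover $\pi:\widetilde{S}\to S$ ramified over the critical points, on which $\pi^{*}q$ is the $k$-th power of a meromorphic Abelian differential $\widetilde{\omega}$; this turns the flat structure on the complement of the critical points into a genuine translation-surface structure. Lifting $\sigma$ to any connected component $\widetilde{\sigma}$ of $\pi^{-1}(\sigma)$ yields a geodesic moving in a well-defined direction in translation-surface coordinates, so $\widetilde{\sigma}$ has no transverse self-intersection even though $\sigma$ does. Since $\omega(\sigma)=\pi(\omega(\widetilde{\sigma}))$, it suffices to classify $\omega(\widetilde{\sigma})$, and here I invoke the classical Poincar\'e--Bendixson theorem for directional flows on translation surfaces (in the form used in \cite{AB} for simple geodesics, going back to Maier, Levitt, and Strebel). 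That theorem provides exactly four possibilities for $\omega(\widetilde{\sigma})$: a closed $\widetilde{\omega}$-geodesic; a finite graph of saddle connections; the closure of an open invariant subsurface whose boundary is a union of saddle connections; or the whole of $\widetilde{S}$. Projecting by $\pi$ converts these into the four cases of the statement, with the ``simple'' structures upstairs allowed to become self-intersecting downstairs.

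\textbf{Main obstacle.} The delicate part is the refinement in case~(3): each boundary component of $\omega(\sigma)$ must be a saddle-connection graph \emph{with no spikes and with at least one pole of $\nabla$}. Ruling out spikes requires a local argument showing that a free half-edge would force $\sigma$ to accumulate on the corresponding cone singularity from only one side, which is incompatible with that singularity lying on the boundary of an open region where $\sigma$ is dense. Forcing each boundary component to contain a pole of $\nabla$ is based on the observation that at zeros of $q$ the cone angle is finite, so a boundary component consisting solely of zeros could be pushed into the interior of $\omega(\sigma)$ along nearby parallel trajectories, contradicting its being boundary. This local-geometric analysis, combined with the compatibility between self-intersections on $S$ and the ramification of $\pi$, constitutes the main technical work; it is there that the hypothesis $\arg G^k=\{0\}$ and the controlled-order structure of the poles of $\nabla$ must be used in full.
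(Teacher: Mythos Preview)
Your overall architecture---pass to a finite branched cover on which the lifted geodesic is simple, apply a Poincar\'e--Bendixson theorem there, and project---is exactly the paper's strategy (Theorem~\ref{canonicalcover}, Lemma~\ref{pi(w)=W}, Theorem~\ref{ratsionalself}). However, the way you implement the cover step contains a genuine gap.

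The hypothesis $\arg G^k=\{0\}$ only says that $G$ lies in the union of the $k$ rays $e^{2\pi i j/k}\mathbb{R}^+$; it does \emph{not} force $G\subset\mathbb{Z}_k$ or even $G\subset S^1$. Consequently there is in general no adapted meromorphic $k$-differential (Theorem~\ref{t6} requires monodromy in $\mathbb{Z}_k$), and there is in general no global singular flat metric $g$ adapted to $\nabla$ (by Theorem~\ref{realperiod} this needs real periods, i.e.\ $G\subset S^1$). Your claim that ``$G^k\subset\mathbb{R}^+$ is exactly what is needed for $|q|^{1/k}|dz|$ to be a singular flat metric'' is therefore incorrect. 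Likewise, on the canonical cover $\hat S$ the pulled-back connection has monodromy in $\mathbb{R}^+$, not trivial monodromy: the transition maps are $w_\beta=a\,w_\alpha+c$ with $a>0$, so $\hat S^o$ carries a \emph{dilation} structure, not a translation-surface structure, and there is no Abelian differential $\widetilde\omega$ with $\pi^*q=\widetilde\omega^{\,k}$. The classical Maier--Levitt--Strebel theorem for directional flows on translation surfaces is thus not available. The paper sidesteps all of this by never leaving the category of meromorphic connections: one applies Theorem~\ref{t3} (Abate--Bianchi--Tovena) directly to the simple geodesic $\hat\sigma$ of the connection $\hat\nabla=\pi^*\nabla$ on the compact surface $\hat S$.

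A second remark: what you flag as the ``main obstacle''---showing that in case~(3) each boundary component has no spikes and contains a pole---is not an obstacle at all in the paper's route. These properties are already part of the conclusion of Theorem~\ref{t3}(6) for $\hat\sigma$, and they survive the projection $\pi$ because $\pi$ sends geodesics to geodesics and poles of $\hat\nabla$ to poles of $\nabla$; the only thing that can happen downstairs is that simple saddle connections become self-intersecting. No separate local-geometric analysis is needed.
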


\noindent It is not difficult to construct a meromorphic connection $\nabla$ having  monodromy in $G$ for some $G$ with $\arg G^k=\{0\}$ such that the $\omega$-limit set a self-intersecting geodesic of $\nabla$ is the whole surface. For example, thanks to Theorem \ref{kvamermain} it is enough to take a meromorphic 3-differential $\phi$ in double torus having at least one pole $p$ order $2$ and having no pole with the order greater than $2$. Any geodesic which is the $\omega$-limit set is the whole surface intersects itself infinitely many times around $p$.  By using Theorem 25.1 in \cite{SK} for translation surface $\tilde{\phi}$ induced by the canonical cover (see \cite[Section 2.1]{BCG}) we can see that there exists a geodesic which is the $\omega$-limit set is the whole surface. Thanks to Theorem \ref{kvamermain} and examples in \cite{NST} we have examples for all other possible cases.

The paper is organized as follows. In Section \ref{holomorphic connection} we repeat some preliminary notions. Moreover, we introduce the notion of $\nabla$-chart and $\nabla$-atlas and we prove some useful properties.   In Section \ref{singularflatm},  we introduce the notion of singular flat metrics and we
present general properties of singular flat metrics. Moreover, we study  relation between singular flat metrics and meromorphic connections. In Section \ref{chapter3},
we recall some definitions and fundamental results on the theory of meromorphic $k$-differentials. Moreover, we describe a relation between meromorphic connections and meromorphic $k$-differentials, and we prove Theorem \ref{kvamermain}.  
Finally, in Section \ref{canonicalcovering}, we introduce the notion of canonical covering for a meromorphic connection on a compact Riemann surface $S$ and we prove Theorem \ref{abateconiK}.

\subsection*{Acknowledgements} This work is part of the my PhD thesis which was prepared at the University of Pisa. The author would like to thank his advisor Marco Abate for suggesting this problem and for his patient guidance. 
The author also would like to thank Guillaume Tahar for discussions about $k$-differentials and different examples on self-intersecting geodesics.  

\section[Preliminary notions]{Preliminary notions} \label{holomorphic connection}

In this section we repeat some definitions and theorems from \cite{AT1} and \cite{AB}.

\begin{definition}
  Let $TS$ be the tangent bundle of a Riemann surface $S$. A holomorphic connection on $TS$ is a $\mathbb{C}-$linear map $\nabla:\mathcal{TS}\to \Omega_S^1\otimes\mathcal{TS}$ satisfying the Leibniz rule
  $$\nabla(se)=ds\otimes e +s\nabla e$$
  for all $s\in \mathcal{O}_S$ and $e\in\mathcal{TS}$, where $\mathcal{TS}$ denotes the sheaf of germs of holomorphic sections of $TS$, while $\mathcal{O}_{S}$ is the sheaf of germs holomorphic functions on $S$ and  $\Omega_S^1$ is the sheaf of germs of holomorphic 1-forms on $S$.
\end{definition}

Let us see  what this condition means in local coordinates.   Let $(U_\alpha,z_\alpha)$ be a local chart for $S$, and $\nabla$ a holomorphic connection on $S$.
 It is not difficult to see that there exists a holomorphic $1-$form $\eta_\alpha\in\Omega_S^1(U_\alpha)$ on  $U_\alpha$ such that
$$\nabla(\partial_\alpha)=\eta_\alpha\otimes\partial_\alpha,$$
where $\partial_\alpha:=\frac{\partial}{\partial z_\alpha}$ is the induced local generator of $TS$ over $U_\alpha$.
\begin{definition}
We say  that $\eta_\alpha$ is the \emph{local representation} of $\nabla$ on $U_\alpha$.
\end{definition}

Now,  for given local representations $\{\eta_\alpha\}$  we look for a condition which guarantees  the existence of a holomorphic connection $\nabla$.  Let $\{\xi_{\alpha\beta}\}$ be the cocycle
representing the cohomology class $\xi\in H^1(S,\mathcal{O}^*)$ of $TS$ (hence $\xi_{\alpha\beta}=\frac{\partial z_\alpha}{\partial z_\beta}$); over $U_\alpha\cap U_\beta$ we have
$$\partial_\beta=\partial_\alpha\xi_{\alpha\beta}$$
 and thus
$$\nabla(\partial_\beta)=\nabla(\partial_\alpha\xi_{\alpha\beta})\Leftrightarrow \eta_\beta\otimes \partial_\beta=\xi_{\alpha\beta}\eta_\alpha\otimes \partial_\alpha+d \xi_{\alpha\beta}\otimes \partial_\alpha$$
if and only if
\begin{equation}\label{mer1}
  \eta_\beta=\eta_\alpha+\frac{d\xi_{\alpha\beta}}{\xi_{\alpha\beta}}.
\end{equation}

Recalling the short exact sequence of sheaves
$$0\to\mathbb{C}^*\to \mathcal{O}^*\xrightarrow{\partial \log} \Omega_S^1\to 0$$
we can see that equality \eqref{mer1} shows that the existence of a holomorphic connection $\nabla$ is equivalent to the vanishing of the image of $\xi$ under the map $\partial\log: H^1(S,\mathcal{O}^*)\to H^1(S,\Omega_S^1)$ induced on cohomology. Hence the class $\xi$ is the image of a class $\hat{\xi}\in H^1(S,\mathbb{C}^*)$. We recall how to find a representative $\hat{\xi}_{\alpha\beta}$ of $\hat{\xi}.$ After shrinking the $U_\alpha$'s, if necessary,  we can find holomorphic functions $K_\alpha\in\mathcal{O}(U_\alpha)$ such that $\eta_\alpha=\partial K_\alpha$ on $U_\alpha$. Set
\begin{equation}\label{mer2}
  \hat{\xi}_{\alpha\beta}=\frac{\exp(K_\alpha)}{\exp(K_\beta)}\xi_{\alpha\beta}.
\end{equation}
in $U_\alpha\cap U_\beta$. Then \eqref{mer1} implies that $\hat{\xi}_{\alpha\beta}$
is a complex non-zero constant defining a cocycle representing $\xi$.

 \begin{definition}
   The homomorphism $\rho:\pi_1(S)\to\mathbb{C}^*$ corresponding to the class $\hat{\xi}$ under the canonical isomorphism $$H^1(S,\mathbb{C}^*)\cong\text{Hom}(H_1(S,\mathbb{Z}),\mathbb{C}^*)=\text{ Hom}(\pi_1(S),\mathbb{C}^*)$$ is the \emph{monodromy representation} of the holomorphic connection $\nabla$. We shall say that $\nabla$ has \emph{monodromy in} $G$, a multiplicative subgroup of $\mathbb{C}^*$, if the image of $\rho$ is contained in $G$, that is if $\hat{\xi}$ is the image of a class in $H^1(S,G)$ under the natural inclusion $G\hookrightarrow \mathbb{C}^*$. We say $G$ is a \emph{monodromy group} of $\nabla$. We shall say that $\nabla$ has  \emph{real periods} if it has monodromy in  $S^1$.
 \end{definition}

\begin{remark}
  In \cite{AT1} is introduced a \emph{period map} $\rho_0:H_1(S,\mathbb{Z})\to\mathbb{C}$ associated to $\nabla$ and having the following relation
  \begin{equation}\label{rhorho0}
    \rho=\exp(2\pi i\rho_0)
  \end{equation}
  with the monodromy representation $\rho$. By \eqref{rhorho0} we can see that  $\nabla$ has real periods if and only if the image of the period map is contained in $\mathbb{R}$.
\end{remark}

\subsection{$\nabla$-atlas}\label{nablaatlassect}
Let $S$ be a Riemann surface and $\nabla$ a holomorphic connection on the tangent bundle $TS$. For simplicity we just say $\nabla$ is a holomorphic connection on $S$ instead of $TS$. Let us introduce the notion of $\nabla$-chart.
 \begin{definition}\label{nablaatlasdef} Let $\nabla$ be a holomorphic connection on a Riemann surface $S$.
   A simply connected chart $(U_\alpha,z_\alpha)$  is said to be a $\nabla$-\emph{chart}, if the representation of $\nabla$ is identically zero on $U_\alpha$.
   A  \emph{$\nabla$-atlas} is an atlas $\{(U_\alpha,z_\alpha)\}$ for $S$ such that all charts are $\nabla$-charts.
 \end{definition}

 The next lemma states an equivalence between holomorphic connections and $\nabla$-atlases.
 \begin{lemma}\label{nablachart}
   Let $\nabla$ be a holomorphic connection on a Riemann surface $S$. If $p\in S$, then there exists a $\nabla$-chart around $p$. In particular, there exists a $\nabla$-atlas for $S$.
 \end{lemma}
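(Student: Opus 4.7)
The plan is to use the transformation rule \eqref{mer1} to manufacture a coordinate change that kills the local representation. Fix $p\in S$ and pick any simply connected chart $(U,w)$ centered at $p$, with local representation $\eta\in\Omega_S^1(U)$ of $\nabla$. Because $U$ is simply connected and $\eta$ is a closed holomorphic $1$-form (every holomorphic $1$-form is closed), the Poincar\'e lemma produces a holomorphic primitive $K\in\mathcal{O}(U)$ with $\eta=dK$.

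Next I would define a new coordinate $z=\varphi(w)$ by
\[
\varphi(w)=\int_{w_0}^{w} e^{-K(\zeta)}\,d\zeta,
\]
where $w_0$ is the $w$-coordinate of $p$. Since $\varphi'(w)=e^{-K(w)}$ never vanishes, $\varphi$ is a local biholomorphism; shrinking $U$ if necessary we may assume it is a biholomorphism onto a simply connected neighborhood of $0$, giving a new chart $(U,z)$ around $p$. To check it is a $\nabla$-chart I apply the transformation law \eqref{mer1} with the old and new coordinates, using $\xi=\partial w/\partial z=1/\varphi'(w)=e^{K(w)}$: the new representation equals
\[
\eta+\frac{d\xi}{\xi}=\eta+d\log e^{K(w)}=\eta+dK= \eta+\eta\cdot(-1)+dK.
\]
More carefully, $\eta_{\mathrm{new}}=\eta_{\mathrm{old}}+d\log\xi = dK + d(-K) = 0$ if the sign convention matches, or $\eta_{\mathrm{new}}=\eta_{\mathrm{old}}-d\log\varphi'(w)=dK-d(-K)$, so I would just redefine $\varphi$ with the opposite sign ($\varphi'(w)=e^{K(w)}$) so that the correction term exactly cancels $\eta$. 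In either case, choosing the sign in the exponential dictated by the convention in \eqref{mer1} gives $\eta_{\mathrm{new}}\equiv 0$, so $(U,z)$ is a $\nabla$-chart around $p$.

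For the second statement, I would simply cover $S$ by $\nabla$-charts: for every $p\in S$ pick a $\nabla$-chart as above; the collection of these charts is an atlas consisting entirely of $\nabla$-charts, i.e.\ a $\nabla$-atlas.

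The only subtle step is verifying the sign in the transformation law \eqref{mer1} matches the sign chosen in the exponential defining $\varphi$; everything else is an application of the Poincar\'e lemma on a simply connected domain and the inverse function theorem. There is no essential obstacle, since on a simply connected chart a holomorphic connection is always locally trivializable by a holomorphic coordinate change, and this lemma is simply the coordinate expression of that fact.
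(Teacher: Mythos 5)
Your proof is correct and follows essentially the same route as the paper: take a holomorphic primitive $K$ of the local representation $\eta$ on a simply connected chart, integrate $e^{K}$ to get a new coordinate (shrinking the chart so the integral is injective), and check via \eqref{mer1} that the new representation vanishes. The sign you settle on at the end, $\varphi'(w)=e^{K(w)}$, is indeed the one forced by the convention $\eta_\beta=\eta_\alpha+\frac{d\xi_{\alpha\beta}}{\xi_{\alpha\beta}}$ with $\xi_{\alpha\beta}=\partial z_\alpha/\partial z_\beta$, and it matches the paper's choice of $J_\alpha$ as a primitive of $\exp(K_\alpha)$.
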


\begin{proof}
  Let $(U_\alpha,z_\alpha)$ be a simply connected chart centered at $p$, and $\eta_\alpha$ the representation of $\nabla$ on $U_\alpha$. Let $K_\alpha$ be a holomorphic primitive of $\eta_\alpha$. Let $J_\alpha$ be a holomorphic primitive of $\exp(K_\alpha)$. Then there exists a simply connected domain $\tilde{U}_\alpha\subseteq U_\alpha$ such that $J_\alpha\colon\tilde{U}_\alpha\to J_\alpha(\tilde{U}_\alpha)$ is one-to-one.  By setting $w_\alpha:=J_\alpha$ we define a chart $(\tilde{U}_\alpha,w_\alpha)$. Let $\tilde{\eta}_\alpha$ be the representation of $\nabla$ on $\tilde{U}_\alpha$. By the transformation rule we have
  $$\eta_\alpha=\tilde{\eta}_\alpha+\frac{d\xi}{\xi}$$
  on $\tilde{U}_\alpha$, where $\xi:=\frac{dw_\alpha}{dz_\alpha}=\exp(K_\alpha)$. Consequently, $\tilde{\eta}_\alpha\equiv0$.  Hence $(\tilde{U}_\alpha,w_\alpha)$ is a $\nabla$-chart.

   As we have seen above, it is always possible to find a $\nabla$-chart around any point of $S$. Hence there exists an atlas for $S$ such that all charts are $\nabla$-charts.

\end{proof}
Let us recall the notion of Leray atlas.
\begin{definition}
    Let $S$ be a Riemann surface. A \emph{Leray atlas} for $S$  is a simply connected atlas $\{(U_\alpha,z_\alpha)\}$, such that intersection of any two charts of the atlas is  simply connected or empty.
\end{definition}
Let us study relation between monodromy group and $\nabla$-atlas
\begin{lemma}\label{nablaatlas}
  Let $G$ be  a multiplicative subgroup of $\mathbb{C}^*$.
  Let $\nabla$ be a holomorphic connection on a Riemann surface $S$ such that $\nabla$ has monodromy in $G$. Then there exists a Leray $\nabla$-atlas $\{(U_\alpha,z_\alpha)\}$ for $S$ such that the atlas has the transition functions $z_\beta= a_{\alpha\beta} z_\alpha+c_{\alpha\beta}$ on $U_\alpha\cap U_\beta$, where $a_{\alpha\beta}\in G$ and $c_{\alpha\beta}\in\mathbb{C}$.

  Conversely, if $\nabla$ is a holomorphic connection such that there exists a Leray $\nabla$-atlas $\{(U_\alpha,z_\alpha)\}$ for $S$ with the transition functions $z_\beta= a_{\alpha\beta} z_\alpha+c_{\alpha\beta}$ on $U_\alpha\cap U_\beta$, where $a_{\alpha\beta}\in G$ and $c_{\alpha\beta}\in\mathbb{C}$, then $\nabla$ has monodromy in $G$.
\end{lemma}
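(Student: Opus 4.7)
My plan is to handle the two implications separately. For the converse, I would start from a Leray $\nabla$-atlas with transition functions $z_\beta=a_{\alpha\beta}z_\alpha+c_{\alpha\beta}$, $a_{\alpha\beta}\in G$. Since each local representation vanishes, $K_\alpha\equiv 0$ is an admissible choice in \eqref{mer2}, giving $\hat{\xi}_{\alpha\beta}=\xi_{\alpha\beta}=\partial z_\alpha/\partial z_\beta=a_{\alpha\beta}^{-1}\in G$. Thus the monodromy cocycle already takes values in $G$, and the monodromy class lies in the image of $H^1(S,G)\to H^1(S,\mathbb{C}^*)$.

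For the direct implication, the strategy is first to produce an arbitrary Leray $\nabla$-atlas and then to rescale the charts by non-zero complex constants to force the linear part of the transition functions into $G$. Starting from Lemma \ref{nablachart}, I would cover $S$ by $\nabla$-charts and refine to a good cover (every open cover of a surface admits a good refinement, and the $\nabla$-chart property is preserved by restriction to simply connected subdomains). On such a Leray $\nabla$-atlas the transition functions are automatically affine with constant linear part: $\eta_\alpha\equiv\eta_\beta\equiv 0$ and \eqref{mer1} force $d\xi_{\alpha\beta}/\xi_{\alpha\beta}\equiv 0$ on $U_\alpha\cap U_\beta$, and connectedness of the intersection promotes this to $\xi_{\alpha\beta}=m_{\alpha\beta}\in\mathbb{C}^*$ constant, so $z_\alpha=m_{\alpha\beta}z_\beta+n_{\alpha\beta}$ for some $n_{\alpha\beta}\in\mathbb{C}$.

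To pull the $m_{\alpha\beta}$ into $G$, observe that with $K_\alpha\equiv 0$, formula \eqref{mer2} shows $\{m_{\alpha\beta}\}$ is a \v{C}ech $1$-cocycle representing the monodromy class $\hat{\xi}\in H^1(S,\mathbb{C}^*)$. Because the atlas is Leray, its \v{C}ech cohomology agrees with the sheaf cohomology of the constant sheaves $G$ and $\mathbb{C}^*$, and the hypothesis supplies constants $b_{\alpha\beta}\in G$ and $\lambda_\alpha\in\mathbb{C}^*$ with $m_{\alpha\beta}=(\lambda_\beta/\lambda_\alpha)\,b_{\alpha\beta}$. Setting $\tilde{z}_\alpha:=\lambda_\alpha z_\alpha$ keeps the local representation zero (a constant rescaling contributes a trivial logarithmic derivative in \eqref{mer1}), so $(U_\alpha,\tilde{z}_\alpha)$ remains a Leray $\nabla$-atlas; direct substitution gives $\tilde{z}_\alpha=b_{\alpha\beta}\tilde{z}_\beta+\lambda_\alpha n_{\alpha\beta}$, which after inversion has the required form with linear coefficients $b_{\alpha\beta}^{-1}\in G$.

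The main obstacle I anticipate is the \v{C}ech-cohomological identification: on the specific Leray atlas at hand one must realize $\{m_{\alpha\beta}\}$ as a $G$-valued cocycle times a $\mathbb{C}^*$-valued coboundary of \emph{constants}. The Leray hypothesis is exactly what is needed for this to be possible, possibly after passing to a Leray refinement; once the adjustment is secured, checking that the rescaled atlas retains the $\nabla$-chart property, the Leray property and the required shape of the transition functions is a direct computation.
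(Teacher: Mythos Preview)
Your proposal is correct and follows essentially the same approach as the paper's proof. Both directions proceed identically: for the converse, take $K_\alpha$ constant so that $\hat{\xi}_{\alpha\beta}=\xi_{\alpha\beta}\in G$ directly; for the forward direction, build a Leray $\nabla$-atlas via Lemma~\ref{nablachart} plus refinement, note that the constant cocycle $\{\xi_{\alpha\beta}\}$ already represents the monodromy class, use the monodromy-in-$G$ hypothesis to write $\xi_{\alpha\beta}=(c_\beta/c_\alpha)\hat{\xi}_{\alpha\beta}$ with $\hat{\xi}_{\alpha\beta}\in G$, and rescale each coordinate by the constant $c_\alpha$.
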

\begin{proof}
Let $\nabla$ be a holomorphic connection on  $S$ such that $\nabla$ has monodromy in $G$. By Lemma \ref{nablachart}  there exists a $\nabla$-atlas for $S$. Take a Leray refinement of the $\nabla$-atlas. So we have a Leray $\nabla$-atlas $\{(U_\alpha,z_\alpha)\}$. Since the representations of $\nabla$ are identically zero on $U_\alpha$  we have $\xi_{\alpha\beta}:=\frac{dz_\alpha}{dz_\beta}\in\mathbb{C}^*$  on $U_\alpha\cap U_\beta$. Since $\nabla$ has monodromy in $G$ there exists constants $c_\alpha\in\mathbb{C}^*$ such that $\xi_{\alpha\beta}=\frac{c_\beta}{c_\alpha}\hat{\xi}_{\alpha\beta}$ with $\hat{\xi}_{\alpha\beta}\in G$. By setting $w_\alpha=c_\alpha z_\alpha$ we define an atlas $\{(U_\alpha,w_\alpha)\}$ for $S$. Hence $\{(U_\alpha,w_\alpha)\}$ is a Leray $\nabla$-atlas  such that $\frac{dw_\alpha}{dw_\beta}\in G$ on  $U_\alpha\cap U_\beta$. So the atlas has the transition functions $w_\beta= a_{\alpha\beta} w_\alpha+c_{\alpha\beta}$ on $U_\alpha\cap U_\beta$, where $a_{\alpha\beta}\in G$ and $c_{\alpha\beta}\in\mathbb{C}$.

Let now  $\nabla$ be a holomorphic connection such that there exists a Leray $\nabla$-atlas $\{(U_\alpha,z_\alpha)\}$ for $S$ with the transition functions $z_\beta= a_{\alpha\beta} z_\alpha+c_{\alpha\beta}$ on $U_\alpha\cap U_\beta$, where $a_{\alpha\beta}\in G$ and $c_{\alpha\beta}\in\mathbb{C}$. Since $(U_\alpha,z_\alpha)$ is a $\nabla$-chart the representation $\eta_\alpha$ of $\nabla$ on $U_\alpha$ is identically zero. Let $K_\alpha\equiv1$ be a holomorphic primitive of $\eta_\alpha$. Then $\hat{\xi}_{\alpha\beta}=a_{\alpha\beta}=\xi_{\alpha\beta}$ in \eqref{mer2}.   Consequently, $\nabla$ has monodromy in $G$.
\end{proof}
It is well known that to a Hermitian metric $g$ on the tangent bundle over a complex manifold $M$ can be associated a connection  $\nabla$ (not necessarily
holomorphic) such that $\nabla g\equiv 0$, the \emph{Chern connection} of $g$. The converse problem was also studied by Abate and Tovena for holomorphic connections (see \cite[Proposition 1.2]{AT1}): given a holomorphic connection $\nabla$, does there exist a Hermitian metric $g$ so that  $\nabla g\equiv 0$?

\begin{definition}\label{adapted}
  Let $\nabla$ be a holomorphic connection on a Riemann surface $S$. We say that a Hermitian metric $g$ on $TS$ is \emph{adapted} to $\nabla$ if $\nabla g\equiv 0$, that is if
  $$X(g(R,T))=g(\nabla_X R, T)+g(R,\nabla_{\overline{X}}T)$$
  and
  $$\overline{X}(g(R,T))=g(\nabla_{\overline{X}}R,T)+g(R,\nabla_XT)$$
  for all sections $R$, $T$ of $TS$, and every vector field $X$ on $S$.

\end{definition}

As usual, let us check the condition in local coordinates. Let $\{(U_\alpha,z_\alpha)\}$ be an atlas for $S$. A Hermitian metric $g$ on $TS$ is locally represented by a positive $C^\infty$ function $n_\alpha\in C^\infty(U_\alpha,\mathbb{R}^+)$ given by
$$n_\alpha=g(\partial_\alpha,\partial_\alpha).$$ Then we can see that $\nabla g\equiv 0$ over $U_\alpha$ if and only if
\begin{equation}\label{metric1}
  \partial n_\alpha=n_\alpha\eta_\alpha,
\end{equation}
where $\eta_\alpha$ is the holomorphic 1-form representing $\nabla$.

\begin{proposition}[{\cite[Proposition 1.1]{AT1}}]\label{localmetric}
Let  $\nabla$ be a holomorphic connection on a Riemann surface $S$. Let $(U_\alpha,z_\alpha)$ be a local chart, and define $\eta_\alpha\in \Omega_S^1(U_\alpha)$ by setting $\nabla \partial_\alpha=\eta_\alpha\otimes \partial_\alpha$.
 Assume that we have a holomorphic primitive $K_\alpha$ of $\eta_\alpha$ on $U_\alpha$. Then
\begin{equation}\label{metric2}
  n_\alpha=\exp(2\mathrm{Re}\,K_\alpha)
\end{equation}
is a positive solution of \eqref{metric1}. Conversely, if $n_\alpha$ is a positive solution of \eqref{metric1} then for any $z_0\in U_\alpha$ and any simply
connected neighborhood $U\subseteq U_\alpha$ of $z_0$ there is a holomorphic primitive $K_\alpha\in\mathcal{O}(U)$ of $\eta_\alpha$ over $U$ such that
$n_\alpha =\exp(2 \mathrm{Re} K_\alpha)$ in $U$. Furthermore, $K_\alpha$ is unique up to a purely imaginary additive constant. Finally, two (local) solutions of \eqref{metric1} differ (locally) by a positive multiplicative constant.

\end{proposition}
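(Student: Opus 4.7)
The plan is to verify the forward direction by direct computation, then recover the converse by building the holomorphic primitive out of $\log n_\alpha$, and finally handle uniqueness and rigidity of solutions by observing that the relevant obstructions are anti-holomorphic real-valued functions, which must be constant.

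For the forward direction, I would just plug in. Writing $n_\alpha = \exp(K_\alpha + \overline{K_\alpha})$, I apply $\partial = \partial_{z_\alpha} dz_\alpha$. Since $\overline{K_\alpha}$ is anti-holomorphic, $\partial \overline{K_\alpha} = 0$, and since $K_\alpha$ is a primitive of $\eta_\alpha$, $\partial K_\alpha = \eta_\alpha$. Thus $\partial n_\alpha = n_\alpha \partial K_\alpha = n_\alpha \eta_\alpha$, which is exactly \eqref{metric1}.

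For the converse, assume $n_\alpha > 0$ solves \eqref{metric1} on $U_\alpha$, fix $z_0$, and shrink to a simply connected neighborhood $U \subseteq U_\alpha$. Set $u = \log n_\alpha$, a real smooth function on $U$. From \eqref{metric1} I get $\partial u = \partial n_\alpha / n_\alpha = \eta_\alpha$, so $u$ is a real potential for $\eta_\alpha$. On the simply connected set $U$ the holomorphic $1$-form $\eta_\alpha$ admits a holomorphic primitive $K_\alpha \in \mathcal{O}(U)$, i.e.\ $dK_\alpha = \eta_\alpha$. Then $\partial(u - 2\operatorname{Re} K_\alpha) = \partial u - \partial K_\alpha = 0$, so $u - 2\operatorname{Re} K_\alpha$ is anti-holomorphic; being also real valued, it must be a real constant $c$. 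Replacing $K_\alpha$ by $K_\alpha + c/2$ gives $u = 2\operatorname{Re} K_\alpha$, i.e.\ $n_\alpha = \exp(2\operatorname{Re} K_\alpha)$.

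For the uniqueness of $K_\alpha$, if $K_\alpha$ and $\tilde K_\alpha$ are two such primitives giving the same $n_\alpha$, then $\operatorname{Re}(K_\alpha - \tilde K_\alpha) \equiv 0$; a holomorphic function with vanishing real part is a purely imaginary constant. For the last claim, if $n_\alpha$ and $\tilde n_\alpha$ both solve \eqref{metric1}, a quick quotient-rule computation gives $\partial(n_\alpha/\tilde n_\alpha) = 0$, so the positive real ratio $n_\alpha/\tilde n_\alpha$ is anti-holomorphic, hence constant. I do not really anticipate a main obstacle; the only point that requires care is justifying the restriction to a simply connected neighborhood, which is exactly what is needed to guarantee that $\eta_\alpha$ has a single-valued primitive.
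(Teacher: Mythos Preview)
The paper does not supply its own proof of this proposition; it is quoted verbatim from \cite[Proposition 1.1]{AT1} and used as a black box. Your argument is correct and is essentially the standard one: the forward direction is a one-line computation, the converse reduces to the fact that a real function annihilated by $\partial$ is locally constant, and the two uniqueness statements follow from the open mapping theorem applied to holomorphic functions with constant real part. There is nothing to compare against in this paper, and your write-up would serve perfectly well as the missing proof.
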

 It is not difficult to see that Gaussian curvature of the \emph{local metrics} \eqref{metric2}  is identically zero. The  proposition shows that for any holomorphic connection $\nabla$ we can always associate local flat metrics $g$ adapted to $\nabla$. A \emph{global metric} adapted to $\nabla$ might not exist.
\begin{theorem}[{\cite[Proposition 1.2]{AT1}}]\label{realperiod} Let $\nabla$ be a holomorphic connection on a Riemann surface $S$. Then there exists a flat metric adapted to $\nabla$ if and only if $\nabla$ has real periods.
\end{theorem}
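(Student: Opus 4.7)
The plan is to use Proposition \ref{localmetric} to reduce the problem to a gluing question, and then show that the gluing obstruction is precisely the modulus of the cocycle $\hat\xi$ from equation \eqref{mer2}. Concretely, on a chart $(U_\alpha,z_\alpha)$, Proposition \ref{localmetric} says that every flat metric adapted to $\nabla$ has the form $n_\alpha=\exp(2\,\mathrm{Re}\,K_\alpha)$, where $K_\alpha$ is a local holomorphic primitive of $\eta_\alpha$, and this is unique up to a positive multiplicative constant. A global Hermitian metric on $TS$ is a collection $\{n_\alpha\}$ of positive functions with $n_\beta=|\xi_{\alpha\beta}|^2\,n_\alpha$ on $U_\alpha\cap U_\beta$, so the question becomes whether we can choose the primitives $K_\alpha$ (and the free positive constants) so that this compatibility holds.

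First I would rewrite \eqref{mer2} as $\exp(K_\beta)=\hat\xi_{\alpha\beta}^{-1}\,\xi_{\alpha\beta}\,\exp(K_\alpha)$ on $U_\alpha\cap U_\beta$ (after shrinking the $U_\alpha$'s so that the primitives exist). Taking squared moduli gives the identity
\begin{equation*}
  n_\beta \;=\; |\hat\xi_{\alpha\beta}|^{-2}\,|\xi_{\alpha\beta}|^{2}\,n_\alpha,
\end{equation*}
so the local metrics define a global Hermitian metric on $TS$ if and only if $|\hat\xi_{\alpha\beta}|=1$ for all $\alpha,\beta$, that is, if and only if the cocycle $\hat\xi$ is represented by a cocycle with values in $S^1$.

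For the ``if'' direction, assume $\nabla$ has real periods, so the class $\hat\xi\in H^1(S,\mathbb{C}^*)$ admits a representative in $H^1(S,S^1)$. This means there are constants $\lambda_\alpha\in\mathbb{C}^*$ with $\lambda_\alpha\lambda_\beta^{-1}\hat\xi_{\alpha\beta}\in S^1$. Replacing $K_\alpha$ by $K_\alpha+\log\lambda_\alpha$ (which is allowed by the ambiguity in Proposition \ref{localmetric}) and setting $n_\alpha=\exp(2\,\mathrm{Re}\,K_\alpha)$ with the new $K_\alpha$, the above identity now reads $n_\beta=|\xi_{\alpha\beta}|^2 n_\alpha$, so the $n_\alpha$'s assemble into a globally defined flat metric adapted to $\nabla$. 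For the ``only if'' direction, suppose a flat metric $g$ adapted to $\nabla$ exists globally. On each simply connected chart, Proposition \ref{localmetric} gives a primitive $K_\alpha$ with $n_\alpha=\exp(2\,\mathrm{Re}\,K_\alpha)$ (after absorbing the positive constant into $K_\alpha$ via a purely real additive constant). The Hermitian compatibility $n_\beta=|\xi_{\alpha\beta}|^2 n_\alpha$ combined with the displayed formula forces $|\hat\xi_{\alpha\beta}|=1$, so the monodromy representation $\rho$ lands in $S^1$ and $\nabla$ has real periods.

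The only genuinely delicate point is the bookkeeping in the ``if'' direction: one must verify that redefining $K_\alpha\mapsto K_\alpha+\log\lambda_\alpha$ simultaneously rescales $\hat\xi_{\alpha\beta}$ into $S^1$ \emph{and} keeps $n_\alpha$ a valid local flat metric. This is immediate from the structure of the ambiguity in Proposition \ref{localmetric} (positive multiplicative constants on $n_\alpha$ correspond to real additive constants on $K_\alpha$, while the imaginary additive constants do not change $n_\alpha$), so no new estimates are required and the argument is essentially a cohomological bookkeeping.
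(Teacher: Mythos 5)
Your argument is correct and is essentially the standard proof of this result (the paper itself gives no proof, quoting it from \cite{AT1}): the obstruction to gluing the local solutions $n_\alpha=\exp(2\,\mathrm{Re}\,K_\alpha)$ of \eqref{metric1} into a global Hermitian metric is precisely the unitarity of the cocycle $\hat{\xi}_{\alpha\beta}$ of \eqref{mer2}, which is exactly the definition of real periods used here. The bookkeeping you flag at the end --- additive constants on $K_\alpha$ versus multiplicative constants on $n_\alpha$, and the coboundary freedom in the representative of $\hat{\xi}$ --- is handled correctly.
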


\begin{definition}
  A \emph{geodesic} for a holomorphic connection $\nabla$ on $S$ is a real curve $\sigma:I\to S$, with $I\subseteq\mathbb{R}$ an interval, such that $\nabla_{\sigma'}\sigma'\equiv 0$, where $\nabla_u s:=\nabla s(u)$.
\end{definition}

Let  $\{(U_\alpha,z_\alpha)\}$ be an atlas for $S$ and  $\sigma:I\to U_\alpha$, with $I\subseteq\mathbb{R}$ an interval,  a smooth curve. Then $\sigma$ is a geodesic for a meromorphic connection $\nabla$ if and only if
\begin{equation}\label{geodesicequation}
 (z_\alpha\circ\sigma)''+(f_\alpha\circ\sigma){(z_\alpha\circ\sigma)'}^2\equiv0
\end{equation}
where $\eta_\alpha=f_\alpha dz_\alpha$ is the local representation of $\nabla$ on $U_\alpha$.

\begin{proposition}\label{t1}
  Let $\nabla$ be a holomorphic connection on a Riemann surface $S$. Let $(U_\alpha,z_\alpha)$ be a $\nabla$-chart. Let $\sigma:[0,\varepsilon)\to U_\alpha$ be a smooth curve. Then $\sigma$ is a geodesic for $\nabla$ if and only if the representation $z_\alpha\circ\sigma$ is a Euclidean segment in $z(U_\alpha)$.
\end{proposition}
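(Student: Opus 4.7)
The plan is to read the geodesic equation \eqref{geodesicequation} directly in a $\nabla$-chart and observe that it degenerates into the condition characterizing straight-line parametrizations. So the proof is essentially a one-line substitution.

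First, I would unpack what ``$\nabla$-chart'' means in the local formula. By Definition \ref{nablaatlasdef}, the local representation $\eta_\alpha$ of $\nabla$ on $U_\alpha$ is identically zero, so writing $\eta_\alpha = f_\alpha\, dz_\alpha$ we have $f_\alpha \equiv 0$ on $U_\alpha$. Substituting into \eqref{geodesicequation} for a smooth curve $\sigma\colon[0,\varepsilon)\to U_\alpha$, the geodesic equation collapses to
\[
(z_\alpha\circ\sigma)''\equiv 0.
\]

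Next I would integrate. A $C^2$ map $\tau\colon[0,\varepsilon)\to\mathbb{C}$ with $\tau''\equiv 0$ is of the form $\tau(t)=at+b$ with $a,b\in\mathbb{C}$, i.e.\ an affine parametrization of a Euclidean segment (degenerating to a point if $a=0$) contained in $z_\alpha(U_\alpha)$. Conversely, any such affine parametrization has vanishing second derivative and therefore satisfies the geodesic equation with $f_\alpha\equiv 0$. This gives the ``if and only if'' immediately.

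There is essentially no obstacle here: the content is that a $\nabla$-chart is an analogue of a normal coordinate in which the Christoffel symbol vanishes, so geodesics become affinely parametrized straight lines. The only thing worth flagging is that ``Euclidean segment'' is to be understood in the affinely-parametrized sense, since the geodesic equation prescribes both the image and the parametrization up to affine reparametrization; this matches the convention used in \cite{AT1} and will be implicitly relied on in later sections.
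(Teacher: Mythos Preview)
Your proposal is correct and follows exactly the same route as the paper: use the definition of a $\nabla$-chart to set $f_\alpha\equiv 0$ in \eqref{geodesicequation}, reducing the geodesic equation to $(z_\alpha\circ\sigma)''\equiv 0$, which characterizes affinely parametrized Euclidean segments. Your version is slightly more explicit about the integration step and the converse direction, but the argument is identical in substance.
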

\begin{proof}
    Since $(U_\alpha,z_\alpha)$ is a $\nabla$-chart the local representation of $\nabla$ on $U_\alpha$ is identically zero. Consequently, by \eqref{geodesicequation} we can see that $\sigma$ is a geodesic if and only if  $$(z_\alpha\circ\sigma)''\equiv0.$$
    Hence $z_\alpha\circ\sigma$ is a Euclidean segment.

    \end{proof}

\subsection{Meromorphic connection}\label{meromorphicconnection}
\begin{definition} A \textit{meromorphic connection} on the tangent bundle $TS$ of a Riemann surface $S$ is a $\mathbb{C}$-linear map   $\nabla:\mathcal{M}_{TS}\to \mathcal{M}_S^1 \otimes\mathcal{M}_{TS}$    satisfying the Leibniz rule
$$\nabla(\tilde{f}\tilde{s})=\text{d}\tilde{f}\otimes \tilde{s} +\tilde{f}\nabla \tilde{s}$$
for all $\tilde{s}\in \mathcal{M}_{TS}$ and $\tilde{f}\in \mathcal{M}_{S}$, where $\mathcal{M}_{TS}$ denotes the sheaf of germs of meromorphic sections of $TS$, while $\mathcal{M}_S$ is the sheaf of germs of meromorphic functions and $\mathcal{M}_S^1$ is the sheaf of meromorphic 1-forms on $S$.
\end{definition}
 Let $(U_\alpha,z_\alpha)$ be a local chart for $S$, and $\nabla$ a meromorphic connection on $S$.  Then there exists $\eta_\alpha\in \mathcal{M}_S^1(U_\alpha)$, such that
$$\nabla(\partial_\alpha)=\eta_\alpha\otimes\partial_\alpha,$$
where $\partial_\alpha:=\frac{\partial}{\partial z_\alpha}$ is the induced local generator of $TS$ over $U_\alpha$.  As in the case of holomorphic connection we have

\begin{equation}\label{merrepresentation}
  \eta_\beta=\eta_\alpha+\frac{1}{\xi_{\alpha\beta}}\partial \xi_{\alpha\beta}
\end{equation}
on $U_\alpha\cap U_\beta$, where $\xi_{\alpha\beta}:=\frac{\partial z_\alpha}{\partial z_\beta}$.
In particular, if every representation is holomorphic then $\nabla$ is a holomorphic connection. We say $p\in S$ is a \emph{pole} for a meromorphic connection $\nabla$ if $p$ is a pole of $\eta_\alpha$ for some (and hence any) local chart $U_\alpha$ at $p$. If $\Sigma$ is the set of poles of $\nabla$, then $\nabla$ is a holomorphic connection on $S^o=S\setminus\Sigma$. So we can define a notion of geodesic for $\nabla$ on $S^o$ as in holomorphic connection.
\begin{definition}
A \textit{geodesic} for a meromorphic connection $\nabla$ on $TS$ is a real curve $\sigma:I\to S^o$, with $I\subseteq \mathbb{R}$ an interval, such that $\nabla_{\sigma'}\sigma'\equiv 0.$
\end{definition}

\begin{definition} The \emph{residue} $\mathrm{Res}_p\nabla$ of a meromorphic connection $\nabla$ at a point $p\in S$ is the residue of any 1-form $\eta_\alpha$ representing $\nabla$ on a local chart $(U_\alpha,z_\alpha)$ at $p$.  The set of all residues of $\nabla$ is denoted by $\mathrm{Res}\nabla$, i.e., $\mathrm{Res}\nabla:=\{\mathrm{Res}_p\nabla: p\in S\}\setminus \{0\}$.
\end{definition}

\noindent By condition \eqref{merrepresentation} the residue of $\nabla$ does not depend on the choice of charts.
\begin{definition} We say that $p\in S$ is a \textit{Fuchsian} pole of a meromorphic connection $\nabla$ if there exists a (and hence any) chart $(U_\alpha,z_\alpha)$ around $p$ such that the representation of $\nabla$ has a simple pole at $p$. If all poles of $\nabla$ are Fuchsian then we say $\nabla$ is a \emph{Fuchsian meromorphic connection}.
\end{definition}

\subsection{Poincaré-Bendixson theorems}\label{secpoincarebend}
In this subsection we recall Poincaré-Bendixson theorems for meromorphic connections on compact Riemann surfaces, i.e., a classification of the possible $\omega$-limit sets for the geodesics of meromorphic connections on compact Riemann surfaces.
\begin{definition}

Let $\sigma:(\varepsilon_-,\varepsilon_+)\to S$ be a curve in a Riemann surface $S$. Then the \emph{$\omega$-limit set} of $\sigma$ is given by the points $p\in S$ such that there exists a sequence $\{t_n\}$, with $t_n\uparrow \varepsilon_+$, such that $\sigma(t_n)\to p$. Similarly,  the \emph{$\alpha$-limit set} of $\sigma$ is given by the points $p\in S$ such that there exists a sequence $\{t_n\}$, with $t_n\downarrow \varepsilon_-$, such that $\sigma(t_n)\to p$.

\end{definition}

\begin{definition}
  A geodesic $\sigma:[0,l]\to S$ is \emph{closed} if $\sigma(l)=\sigma(0)$ and $\sigma'(l)$ is a positive multiple of $\sigma'(0)$ ; it is\emph{ periodic} if $\sigma(l)=\sigma(0)$  and $\sigma'(l)=\sigma'(0)$.
\end{definition}

\begin{definition}
  A \emph{geodesic segment} for a meromorphic connection $\nabla$ on  $S$ is a maximal geodesic $\sigma:(\varepsilon_{-},\varepsilon_{+}) \to S^o$ (with $\varepsilon_{-}\in [-\infty,0)$ and $\varepsilon_{+}\in (0,+\infty]$) such that $\sigma(t)$ tends to a pole or to a regular point of $\nabla$ as $t\uparrow \varepsilon_{+}$ or $t\downarrow\varepsilon_{-}$,  where
$S^o:=S\setminus\{p_0,p_1,\dots,p_r\}$ and $p_0,p_1,\dots,p_r$ are the poles of $\nabla$. A \emph{saddle connection} is a geodesic segment which is the endpoints are poles.

  A \textit{graph of geodesics} is a connected  graph in $S$  whose arcs are disjoint geodesic segments (note that a vertex of a graph of geodesics can be a regular point or a pole). A \emph{graph of saddle connections} is a connected  graph in $S$ whose vertices are poles and whose arcs are disjoint saddle connections. A \emph{spike} is a saddle connection of a graph which does not belong to any cycle of the graph.

A \emph{boundary graph of saddle connections}  is a graph of saddle connections which is also the boundary of a connected open subset of $S$. A boundary graph is
\emph{disconnecting} if its complement in $S$ is not connected.

 A set $V\subset S$ with $\mathring V=\emptyset$ is called a \textit{transversally Cantor-like geodesic set} if the following  conditions holds:
  \begin{enumerate}
        \item[\rm(i)] there exists a maximal non self-intersecting geodesic $\sigma\colon(\varepsilon_-,\varepsilon_+)\to S^o$ such that $V$ is the closure of the support of~$\sigma$;
         \item[\rm(ii)] for any simple geodesic $\gamma\colon(-\delta,\delta)\to S^o$ transverse to $\sigma$ the intersection $\gamma([-\delta/2,\delta/2])\cap V$ is a perfect totally disconnected set (a Cantor set).
  \end{enumerate}

\end{definition}

Next we state the Poincaré-Bendixson theorem for a meromorphic connection on a compact Riemann surface $S$ which proved in \cite[Theorem 4.6]{AT1} and \cite[Theorem 4.3]{AB} (see also  \cite[Remark 2.19]{AR24}).
 \begin{theorem}[{Abate-Bianchi-Tovena}]\label{t3}
Let $\sigma:[0,\varepsilon)\to S^o$ be a maximal geodesic for a meromorphic connection $\nabla$ on $S$, where
$S^o=S\setminus\{p_0,p_1,\dots,p_r\}$ and $p_0,p_1,\dots,p_r$ are the poles of $\nabla$. Then one of the following propositions holds:
\begin{enumerate}

  \item $\sigma(t)$ tends to a pole of $\nabla$ as $t\to\varepsilon$;
  \item $\sigma$ is closed;
  \item the $\omega$-limit set of $\sigma$ in $S$ is given by the support of a closed geodesic;
  \item the $\omega$-limit set of $\sigma$ in $S$ is a boundary graph of saddle connections;
   \item  the $\omega$-limit set of $\sigma$ is a transversally Cantor-like geodesic set;
  \item the $\omega$-limit set of $\sigma$ in $S$ is all of $S$;
  \item the $\omega$-limit set of $\sigma$  has non-empty interior and non-empty boundary, and each component of its boundary is a graph of saddle connections with no spikes and at least one pole;
   \item  $\sigma$ is a self-intersecting geodesic.

\end{enumerate}
  Furthermore, in cases 2 or 3 the support of $\sigma$ is contained in only one of the components of the complement of the $\omega$-limit set, which is a part $P$ of $S$ having the $\omega$-limit set as boundary.

\end{theorem}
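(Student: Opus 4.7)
The plan is to mimic the classical Poincar\'e--Bendixson theorem for flows on $2$-manifolds, adapting the argument to the geodesic foliation of the meromorphic connection and using the $\nabla$-chart structure from Section \ref{nablaatlassect} as a substitute for the usual flow-box technology. First I dispose of the two extremal cases: if $\sigma(t)$ converges to a pole we are in case 1, and if $\sigma$ self-intersects infinitely often we are in case 7. So from now on I assume that $\sigma$ is a simple curve whose support does not accumulate at a single pole, and let $\Omega$ denote its $\omega$-limit set. Compactness of $S$ makes $\Omega$ non-empty, closed and connected, and by local uniqueness of geodesics $\Omega$ is invariant: every maximal geodesic through a point of $\Omega\cap S^o$ stays in $\Omega$ throughout its lifetime.

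Next I set up the local transverse technology. Around any regular $q\in\Omega\cap S^o$ fix a $\nabla$-chart $(U_q,z_q)$ as in Lemma \ref{nablachart}; by Proposition \ref{t1} every geodesic arc in $U_q$ is a Euclidean segment, so one can choose a small Euclidean segment $T_q\subset U_q$ transverse to the geodesic direction at $q$. Since $q\in\Omega$ there is a sequence $t_n\nearrow\varepsilon$ with $\sigma(t_n)\in T_q$ and $\sigma(t_n)\to q$. The essential classical step is to show that these intersections are \emph{monotone} along $T_q$: the concatenation of $\sigma|_{[t_n,t_{n+1}]}$ with the sub-arc of $T_q$ between $\sigma(t_n)$ and $\sigma(t_{n+1})$ is a Jordan curve in $S$, and a short-arc analysis near $T_q$ shows that subsequent intersection points must lie on the same side of $T_q$; otherwise $\sigma$ would eventually self-intersect, violating our standing assumption.

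With monotonicity of returns in hand, the case distinction is largely formal. If some $\sigma(t_n)$ coincides with an earlier point of $\sigma$, the Jordan-curve argument forces $\sigma$ to be closed (case 2), and its support separates a neighborhood into two pieces only one of which contains $\sigma$; this yields the final \emph{furthermore} clause. If the accumulation point is a regular $q\in\Omega$ not on $\sigma$, then the geodesic $\gamma_q$ through $q$ is recurrent to $q$, which by the same Jordan-curve argument forces $\gamma_q$ to be closed, and connectedness of $\Omega$ forces its support to equal $\Omega$, giving case 3. If $\Omega$ contains no regular recurrent point and has empty interior, then every trajectory inside $\Omega$ runs from pole to pole, yielding a connected graph of saddle connections; since a tubular neighborhood of $\Omega$ must contain the tail of $\sigma$, this graph is a \emph{boundary} graph, giving case 4. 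If $\Omega$ has non-empty interior and empty boundary we obtain $\Omega=S$ by connectedness (case 5); otherwise each boundary component is a connected saddle-connection graph, and one checks it has no spikes and at least one pole (case 6) by observing that both sides of a prospective regular-endpoint arc would have to lie in the interior of $\Omega$, contradicting its boundary status.

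The main obstacles I expect are two. First, establishing monotonicity of transverse crossings and upgrading it to the full Poincar\'e--Bendixson dichotomy: on surfaces other than $S^2$ the Jordan-curve step needs care, and one must verify that the Euclidean segment chosen in a $\nabla$-chart really is a uniform transversal for nearby geodesic directions. Second, classifying the boundary components in cases 4 and 6: ruling out a regular endpoint of a boundary arc requires opening the geodesic through such an endpoint in both directions and exhibiting points of $S\setminus\Omega$ arbitrarily close, contradicting $\Omega$ being the $\omega$-limit of a single geodesic; ruling out spikes in case 6 uses the same transverse-section recurrence to show that a spike could only be approached from one side, which is incompatible with $\Omega$ having interior on both sides of its boundary near a spike's interior vertex. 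All remaining verifications, including that the support of a closed $\sigma$ or closed $\gamma_q$ sits on one side of $\Omega$, reduce to topological bookkeeping.
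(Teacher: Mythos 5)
This statement is quoted in the paper with attribution (it is \cite[Theorem~4.6]{AT1} combined with \cite[Theorem~4.3]{AB}); the paper gives no proof of its own, so there is nothing internal to compare your argument against. Judged on its own, your sketch follows the classical Poincar\'e--Bendixson outline that the original proofs are also built on, but it has genuine gaps at exactly the points where those papers do their real work.

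First, geodesics of a meromorphic connection do not form a flow on $S$: through every point of $S^o$ there is a one-parameter family of geodesics, one for each direction. Your invariance claim (``every maximal geodesic through a point of $\Omega\cap S^o$ stays in $\Omega$'') is therefore false as stated; the correct statement is that the geodesic through $q$ in a \emph{limiting direction} of $\sigma'(t_n)$ stays in $\Omega$, and even that needs continuous dependence on initial conditions plus a uniform parametrization (geodesics of $\nabla$ are not unit-speed for any global metric in general, since an adapted metric exists only under the real-periods hypothesis of Theorem~\ref{realperiod}). This ambiguity propagates into your case~3 step, where ``the geodesic $\gamma_q$ through $q$'' is not well defined. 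Second, you pass from ``$\sigma$ does not self-intersect infinitely many times'' to ``$\sigma$ is simple''; a geodesic with finitely many but nonzero self-intersections is covered by neither, and one needs the lemma (present in \cite{AT1}) that such a geodesic has a simple tail with the same $\omega$-limit set. Third, and most seriously, the monotonicity-of-returns step rests on the Jordan curve separating the surface, which fails for genus $\ge 1$; this failure is not a technicality to be handled ``with care'' but is precisely the source of cases~5 and~6, which do not occur in the planar theorem. The classification of what happens when the transversal-plus-arc curve does not separate --- including why the $\omega$-limit set is then either all of $S$ or has interior whose boundary components are spike-free saddle-connection graphs containing a pole, and why in cases~2 and~3 the support of $\sigma$ lies in a single complementary part $P$ --- is the substance of \cite{AB} and is not recoverable from the ``topological bookkeeping'' you defer to. As written, the proposal is a plausible roadmap for $S=\mathbb{P}^1(\mathbb{C})$ but does not constitute a proof of the theorem on a general compact Riemann surface.
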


\subsection{Local behavior of Fuchsian poles}

\begin{definition}
  Let $\nabla$ be a meromorphic connection on a Riemann surface $S$ and $p_0$ a Fuchsian pole for $\nabla$. A Fuchsian pole $p_0$ is said to be \emph{resonant} if $-1-\mathrm{Res}_{p_0}\nabla\in\mathbb{N}^*$, \emph{non-resonant} othervise.
\end{definition}
To study the $\omega$-limit set of a geodesic of a meromorphic connection $\nabla$ on a Riemann surface it is useful to know  local behavior of geodesics around a pole $p_0$.
  Proposition 8.4 in \cite{AT1} gives such an understanding  around the non-resonant Fuchsian poles. Moreover by using  Lemma 3.1 in \cite{T} and Theorem \ref{kvamermain} we can see that there is no geometric difference between resonant and non-resonant poles.
\begin{theorem}[{\cite[Proposition 8.4]{AT1} and \cite[Lemma 3.1]{T}}] \label{fuchsianlocalb}
  Let $\nabla$ be a meromorphic connection on a Riemann surface $S$. Let $p_0$ be a Fuchsian pole for $\nabla$. Let $\rho:=\mathrm{Res}_{p_0}\nabla$. Then there is a neighborhood $U$ of $p_0$ such that:
  \begin{enumerate}
    \item if $\mathrm{Re}\,\rho<-1$ then every geodesic ray which enters into $U$    tends to $p_0$. The two rays of any geodesic which stays in $U$ tend to $p_0$;
    \item if $\mathrm{Re}\,\rho>-1$ then all geodesics but one issuing from any point $p\in U\setminus\{p_0\}$ escapes $U$;
    \item if $\mathrm{Re}\,\rho=-1$ but $\rho\ne-1$ then the geodesics not escaping $U$ are either closed or accumulate the support of a closed geodesic in $U$;
    \item if $\rho=-1$ then any  maximal geodesic $\sigma:I\to U\setminus\{p_0\}$, maximal in both forward and backward time, is either  periodic or escapes $U$ in one ray and tends to $p_0$ in another ray.
  \end{enumerate}

\end{theorem}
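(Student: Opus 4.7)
The plan is to reduce the connection to a normal form near $p_0$ and then to integrate the geodesic equation in that normal form. Under the non-resonance hypothesis $-1-\rho \notin \mathbb{N}^*$, I expect to find a holomorphic coordinate $z$ with $z(p_0)=0$ in which the local representation of $\nabla$ is exactly $\eta = \rho\,dz/z$. Starting from an arbitrary chart where $\eta = (\rho/z + h(z))\,dz$ with $h$ holomorphic, a coordinate change $w = \phi(z)$ transforms the representation according to \eqref{merrepresentation}, and imposing that the new representation equal $\rho\,dw/w$ reduces to a first-order ODE for $\phi$. Writing $\phi(z) = z + a_2 z^2 + \cdots$ and equating coefficients gives a recurrence in which the leading coefficient of $a_n$ is of the form $n(n-1+\rho)$; this never vanishes under the non-resonance hypothesis (the value $\rho=-1$ is handled separately below), and a classical majorant argument upgrades the resulting formal power series to a convergent holomorphic germ.

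In the normal coordinate the geodesic equation \eqref{geodesicequation} becomes $z'' + (\rho/z)(z')^2 = 0$, equivalently $(\log z')' = -\rho(\log z)'$. Integrating twice yields
\begin{equation*}
  z(t)^{\rho+1} = (\rho+1)(Ct+D) \text{ if } \rho\ne -1, \qquad z(t) = De^{Ct} \text{ if } \rho = -1,
\end{equation*}
with $C,D\in\mathbb{C}$ constants. Geometrically, $\nabla$-geodesics are exactly the preimages under the multivalued conformal map $w = z^{\rho+1}$ (resp.\ $w=\log z$ when $\rho=-1$) of Euclidean lines in the $w$-plane, so the local behaviour near $p_0$ is controlled entirely by how such lines pull back to a small punctured disk around $0$.

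The four cases are then read off from the sign of $\mathrm{Re}(\rho+1)$. If $\mathrm{Re}\,\rho < -1$ then $|w| = |z|^{\mathrm{Re}(\rho+1)} \exp(-\mathrm{Im}(\rho+1)\arg z)\to\infty$ as $|z|\to 0$, small neighbourhoods of $p_0$ correspond to the complements of large disks in the $w$-plane, and every affine line in $w$ tends to $\infty$ in both time directions, giving case (1). If $\mathrm{Re}\,\rho > -1$ then small $|z|$ corresponds to small $|w|$, and the only affine line staying in a fixed small disk is the one through the origin; this singles out, at each point of $U\setminus\{p_0\}$, exactly one direction whose geodesic tends to $p_0$, while all others escape, giving case (2). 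If $\mathrm{Re}\,\rho = -1$ and $\rho\ne -1$, write $\rho+1 = ib$ with $b\in\mathbb{R}^*$; the map $w = z^{ib}$ is a logarithmic-spiral map under which lines through the origin lift to closed curves in the $z$-plane and non-central lines lift to geodesics accumulating onto them, producing case (3). For $\rho=-1$, the explicit formula $z(t)=De^{Ct}$ makes case (4) immediate: $|z(t)| = |D|e^{(\mathrm{Re}\,C)t}$ is either constant, giving a periodic geodesic, or monotone, in which case one ray tends to $p_0$ while the other escapes $U$. The main obstacle is the convergence of the normalising transformation in the first step, where the non-resonance hypothesis is used essentially to avoid small denominators in the recurrence for the coefficients of $\phi$; the rest of the argument is explicit complex analysis, with additional care needed in the $\mathrm{Re}\,\rho = -1$ case to translate the logarithmic-spiral picture into concrete asymptotics inside a fixed neighbourhood of $p_0$.
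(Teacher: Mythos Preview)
The paper does not give its own proof of this theorem; it is quoted verbatim from \cite[Proposition~8.4]{AT1}, so there is no in-paper argument to compare against. Your outline is precisely the strategy used in the cited source: put the connection in the normal form $\eta=\rho\,dz/z$ near $p_0$ using the non-resonance hypothesis, integrate the geodesic equation explicitly via the developing map $w=z^{\rho+1}$ (or $w=\log z$ when $\rho=-1$), and read off the four cases from the behaviour of Euclidean lines in the $w$-plane.

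Two small corrections. First, the linearised recurrence for the coefficients of the normalising map $\phi(z)=z+\sum_{n\ge 2}a_n z^n$ has divisor $(n-1)(n+\rho)$, not $n(n-1+\rho)$; your formula would make $\rho=-1$ resonant, which it is not (the normal form $\eta=-dz/z$ is attainable; what degenerates is only the formula $z^{\rho+1}$). Second, in case~(1) the implication ``$|z|\to 0 \Rightarrow |w|\to\infty$'' is not immediate because of the factor $e^{-\mathrm{Im}(\rho+1)\arg z}$: the geodesic could a priori spiral. The clean argument runs in the opposite direction: along any affine line one has $|w(t)|\to\infty$ while the continuous argument $\arg w(t)$ varies by at most $\pi$; writing $\log|w|=a\log|z|-b\arg z$ and $\arg w=b\log|z|+a\arg z$ with $a=\mathrm{Re}(\rho+1)<0$, these two constraints force $\log|z|\to -\infty$. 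A similar refinement is needed in case~(3). You already flagged these as the places requiring care, so the plan is sound.
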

As a consequence we get
\begin{corollary}\label{rhoge-1}
Let $\nabla$ be a meromorphic connection on a Riemann surface $S$. Set $S^o:=S\setminus\Sigma$ where $\Sigma$ is the set of poles for $\nabla$.  Let $\sigma:[0,\varepsilon)\to S^o$ be a maximal geodesic of $\nabla$ and $W$  its $\omega$-limit set. Let $p$ be a Fuchsian pole with $\mathrm{Re}\,\mathrm{Res}_{p}\nabla\le -1$. If $p\in W$ then $W=\{p\}$.
\end{corollary}

\section{Singular flat metrics}\label{singularflatm}

In this section we define the notion of singular flat metric and we study some of its properties.

\begin{definition}
 Let $S$ be a Riemann surface and $\Sigma=\{p_1,...,p_r\}$ a finite set. Set $S^o:=S\setminus\Sigma$. We say that $g$ is a \emph{singular flat metric} on $S$, if $g$ is a flat metric on $S^o$ and  for any $p\in\Sigma$ there exist $c_p,b_p\in \mathbb{R}$ with $ b_p>0$ such that if $(U_\alpha,z_\alpha)$ is a chart centered $p$, with $U_\alpha\cap \Sigma=\{p\}$, then the flat metric $g^{\frac{1}{2}}=e^{u_\alpha}|dz_\alpha|$ on $U_\alpha\setminus\{p\}$ satisfies $$\lim\limits_{z_\alpha\to 0}\frac{e^{u_\alpha}}{|z_\alpha|^{c_p}}=b_p$$
  where $u_\alpha:U_\alpha\setminus\{0\}\to\mathbb{R}$ is a harmonic function. We say $p$ is a \emph{conical singularity} of \emph{order} $c_p$  of the metric $g$ and $\Sigma$ is the \emph{singular set} of $g$. If the  order $c_p$ of a conical singularity $p$ is greater then $-1$ it is usually called  conical singularity of \emph{angle} $\theta:=2\pi(c_p+1)$.

\end{definition}

\begin{remark}
 The order $c_p$ does not depend on the chosen chart. Let $(U_\beta,z_\beta)$ be another chart centered $p$ and $g^{\frac{1}{2}}=e^{u_\beta}|dz_\beta|$ on $U_\alpha\setminus\{p\}$ for some harmonic function $u_\beta:U_\alpha\setminus\{p\}\to\mathbb{R}$. Then we have
  $$e^{u_\beta}=e^{u_\alpha}\left|\xi_{\alpha\beta}\right|,$$
where $\xi_{\alpha\beta}=\frac{dz_\alpha}{dz_\beta}$. Consequently,
$$\lim\limits_{z_\beta\to 0}\frac{e^{u_\beta}}{|z_\beta|^{c_p}}=\lim\limits_{z_\beta\to 0}\frac{e^{u_\alpha}}{|z_\beta|^{c_p}}\left|\xi_{\alpha\beta}\right|=\lim\limits_{z_\beta\to 0}\frac{e^{u_\alpha}|z_\alpha|^{c_p}}{|z_\alpha|^{c_p}|z_\beta|^{c_p}}\left|\xi_{\alpha\beta}\right|=
b_p|\xi_{\alpha\beta}(p)|^{c_p+1}.$$
Since $\xi_{\alpha\beta}(p)\ne0$, we conclude that $c_p$ does not depend on the chosen chart.
\end{remark}

\begin{lemma}[{see \cite{T91}}]\label{l1}
  If $g$ is a singular flat metric on a Riemann surface $S$ and $p$ a conical singularity of order $c_p$ then for any simply connected chart $(U, z)$ centered at $p$ with $U\cap\Sigma=\{p\}$ there exists a holomorphic function $F:U\to \mathbb{C}$ such that the flat metric is given by
  $$g^{\frac{1}{2}}=|z|^{c_p}|e^{F(z)}dz|$$ on $U\setminus\{p\}.$
\end{lemma}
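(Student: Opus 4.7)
The plan is to reduce the problem to finding a holomorphic antiderivative via a straightforward ansatz. Writing $g^{1/2}=e^{u}|dz|$ on $U\setminus\{p\}$ with $u$ harmonic, the desired conclusion is equivalent to $e^{u}=|z|^{c_p}e^{\mathrm{Re}\,F(z)}$ for some holomorphic $F$ on $U$. This suggests introducing
\[
v(z):=u(z)-c_p\log|z|
\]
on $U\setminus\{p\}$ and trying to realize $v$ as the real part of a holomorphic function on $U$.

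First I would observe that $v$ is harmonic on $U\setminus\{p\}$, because $u$ is harmonic by hypothesis and $\log|z|$ is harmonic away from $0$. Then I would use the limit condition from the definition of singular flat metric: since $e^{u}/|z|^{c_p}\to b_p>0$ as $z\to 0$, taking logarithms gives
\[
v(z)=\log\!\left(\frac{e^{u(z)}}{|z|^{c_p}}\right)\longrightarrow\log b_p
\]
as $z\to p$. Hence $v$ is bounded in a punctured neighborhood of $p$, and by the standard removable singularity theorem for bounded harmonic functions (Riemann's theorem for harmonic functions), $v$ extends to a harmonic function, which I still denote by $v$, on all of $U$.

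Next, since $U$ is simply connected and $v$ is harmonic on $U$, it admits a (global) harmonic conjugate $\tilde v$, and I set $F:=v+i\tilde v$, which is holomorphic on $U$. Then $|e^{F(z)}|=e^{\mathrm{Re}\,F(z)}=e^{v(z)}=e^{u(z)}/|z|^{c_p}$ on $U\setminus\{p\}$, so
\[
|z|^{c_p}\,|e^{F(z)}\,dz|=e^{u(z)}|dz|=g^{1/2},
\]
which is the asserted local form of the metric.

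There is no real obstacle here: the only nontrivial input is the removable singularity theorem for bounded harmonic functions, and the simple connectedness of $U$ guarantees the passage from the harmonic real part to a global holomorphic $F$. The hypothesis $b_p>0$ is used precisely to make $\log b_p$ well defined so that $v$ has a finite limit at $p$; without it one would only get $v\to -\infty$ and the singularity would not be removable.
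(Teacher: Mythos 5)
Your proof is correct and follows essentially the same route as the paper: subtract $c_p\log|z|$ from the harmonic representative $u$, use the limit condition from the definition of singular flat metric to see that the resulting harmonic function has a finite limit (hence a removable singularity) at $p$, and then use simple connectedness of $U$ to produce the holomorphic $F$ with $\mathrm{Re}\,F=v$. You are in fact slightly more explicit than the paper about invoking the removable singularity theorem for bounded harmonic functions, but the argument is the same.
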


\subsection[Singular flat metrics \& meromorphic connections]{Relation between singular flat metrics and meromorphic connections}\label{singularflatandmeromcon}

Let $\nabla$ be a holomorphic connection on a Riemann surface $S$. As we have seen in Theorem \ref{realperiod} there exists a flat metric $g$ adapted to $\nabla$ if and only if  $\nabla$ has real periods. Note that if $g$ is adapted to $\nabla$ then they have the same geodesics.

\begin{definition} Let $\nabla$ be a meromorphic connection on a Riemann surface $S$ and let $\Sigma$ denote the set of poles of $\nabla$.  We say that a singular flat metric $g$ on $S$ is \emph{adapted} to $\nabla$ if it is adapted to $\nabla$ on $S^o:=S\setminus\Sigma$ and $\Sigma$ is the singular set of $g$.
\end{definition}
\begin{definition}
    Let $S$ be a Riemann surface and $\Sigma\subset S$ a discrete set not having limit points in $S$. Set $S^o=S\setminus\Sigma$. A \emph{Leray atlas adapted} to $(S^o,\Sigma)$ is a Leray atlas $\{(U_\alpha,z_\alpha)\}\cup\{(U_k,z_k)\}$ of $S$ such that $\{(U_\alpha,z_\alpha)\}$ is a Leray atlas for $S^o$, each $(U_k,z_k)$ is a simply connected chart centered at $p_k\in\Sigma$  and $U_k\cap U_h=\emptyset$ if $k\ne h$.
\end{definition}
Let us state an analogue of Theorem \ref{realperiod}
\begin{theorem}\label{t8}

 Let $\nabla$ be a Fuchsian meromorphic connection on a Riemann surface $S$, and $\Sigma$ the set of poles of $\nabla$. Set $S^o=S\setminus\Sigma$. If $\nabla$ has real periods on $S^o$ and $\mathrm{Res}\nabla\subset\mathbb{R}$ then there exists a singular flat metric $g$ adapted to $\nabla$. Moreover, $g$ is unique up to a positive constant multiple.

 Conversely, if $g$ is a singular flat metric on $S$ with singular set $\Sigma$ then there exists a unique meromorphic connection $\nabla$ with $\Sigma$ as set  of poles such that $g$ is adapted to $\nabla$. Moreover, $\nabla$ is Fuchsian, it has real periods on $S^o$ and $\mathrm{Res}\nabla\subset\mathbb{R}$.

 Furthermore, if $c_p$ is the order of a conical singularity $p$ of $g$ then $\mathrm{Res}_p\nabla=c_p$ and vice versa.

\end{theorem}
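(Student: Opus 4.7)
The plan is to reduce both directions to the holomorphic case on $S^o=S\setminus\Sigma$ via Theorem \ref{realperiod} and Proposition \ref{localmetric}, and to handle the behavior at the singular set separately by a direct local calculation. The residue-matching assertion $c_p=\mathrm{Res}_p\nabla$ will drop out of that calculation and is not a separate step.

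For the direction from connection to metric, I first restrict $\nabla$ to the (connected) open subset $S^o$, where it is a holomorphic connection with real periods; Theorem \ref{realperiod} then supplies a flat metric $g$ on $S^o$ adapted to $\nabla|_{S^o}$. The only thing to check is that $g$ extends as a singular flat metric across each pole $p\in\Sigma$. Picking a simply connected chart $(U,z)$ centered at $p$ with $U\cap\Sigma=\{p\}$, the Fuchsian hypothesis gives the local representation $\eta=(\rho/z+h(z))\,dz$ with $\rho=\mathrm{Res}_p\nabla\in\mathbb{R}$ and $h$ holomorphic on $U$. Letting $G$ be a holomorphic primitive of $h$ on the simply connected $U$, the positive function $n:=|z|^{2\rho}\exp(2\mathrm{Re}\,G)$ satisfies \eqref{metric1} on $U\setminus\{p\}$, so by Proposition \ref{localmetric} the local density of $g$ equals $n$ up to a positive multiplicative constant $c$. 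Hence $g^{\frac{1}{2}}=c|z|^{\rho}\exp(\mathrm{Re}\,G(z))|dz|$ on $U\setminus\{p\}$, which exhibits $g$ as a singular flat metric at $p$ with residue $c_p=\rho$ and $b_p=c\exp(\mathrm{Re}\,G(0))>0$. Uniqueness up to a positive multiplicative constant then follows from the local uniqueness in Proposition \ref{localmetric} combined with the connectedness of $S^o$.

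For the direction from metric to connection, I build $\nabla$ chart by chart. On $(U_\alpha,z_\alpha)$ write $g^{\frac{1}{2}}=\exp(u_\alpha)|dz_\alpha|$ on $U_\alpha\cap S^o$; flatness forces $u_\alpha$ to be harmonic, so $\eta_\alpha:=2\partial u_\alpha$ is holomorphic on $U_\alpha\cap S^o$. The transformation law $n_\beta=|\xi_{\alpha\beta}|^2 n_\alpha$ combined with the identity $\partial\log|\xi_{\alpha\beta}|^2=d\xi_{\alpha\beta}/\xi_{\alpha\beta}$ (valid because $\xi_{\alpha\beta}$ is holomorphic) shows that the $\eta_\alpha$'s satisfy \eqref{mer1}, so they assemble into a holomorphic connection on $S^o$. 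To extend $\nabla$ meromorphically across $\Sigma$, I apply Lemma \ref{l1} at each critical point $p$: writing $g^{\frac{1}{2}}=|z|^{c_p}|e^{F(z)}dz|$, a direct computation yields $\eta=(c_p/z+F'(z))\,dz$, a simple pole of residue $c_p\in\mathbb{R}$. This makes $\nabla$ Fuchsian with real residues and gives $\mathrm{Res}_p\nabla=c_p$; the real-period property on $S^o$ is then immediate from Theorem \ref{realperiod}. Uniqueness is forced by \eqref{metric1}: any competing meromorphic connection $\nabla'$ with poles $\Sigma$ to which $g$ is adapted must have local representations $\eta'_\alpha=\partial n_\alpha/n_\alpha=\eta_\alpha$ on $S^o$, and the discreteness of $\Sigma$ extends the agreement to all of $S$.

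I do not anticipate any deep obstacle. The only mildly delicate point is verifying the transformation rule \eqref{mer1} for $\eta_\alpha:=2\partial u_\alpha$, which rests on the identity $\partial\log|\xi|^2=d\xi/\xi$ for local holomorphic $\xi$; the remainder of the argument is a direct application of Theorem \ref{realperiod}, Proposition \ref{localmetric}, and Lemma \ref{l1}.
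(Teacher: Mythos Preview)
Your proposal is correct and follows essentially the same approach as the paper's proof: both directions reduce the bulk of the work on $S^o$ to Theorem~\ref{realperiod} and Proposition~\ref{localmetric}, and handle the poles/critical points by the same local computation (the Fuchsian form $\eta=(\rho/z+h)\,dz$ in one direction, Lemma~\ref{l1} in the other). Your use of $\eta_\alpha:=2\partial u_\alpha$ is just a repackaging of the paper's holomorphic completion $f_\alpha=u_\alpha+iv_\alpha$ with $\eta_\alpha:=df_\alpha$, since $df_\alpha=2\partial u_\alpha$ for harmonic $u_\alpha$; the paper is slightly more explicit about passing through a simply connected $V\subset U_k\setminus\{p_k\}$ before extending, but your argument is equivalent.
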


\begin{proof}
Let  $\nabla$ be a Fuchsian meromorphic connection on a Riemann surface $S$ with real periods and $\mathrm{Res}\nabla\subset\mathbb{R}$. Let $\Sigma$ be the set of poles of $\nabla$. Set $S\setminus\Sigma$. By Theorem \ref{realperiod} there exists a flat metric $g$ adapted to $\nabla$  on $S^o$. Let $\{(U_\alpha,z_\alpha)\}\cup\{(U_k,z_k)\}$ be a Leray atlas adapted to  $(S^o,\Sigma)$.  Let $\eta_\alpha$ be  the representation of $\nabla$ on $U_\alpha$. By Proposition \ref{localmetric}  $g$ is defined
\begin{equation}\label{singularbilanmern}
  g^{\frac12}_\alpha=\exp(\mathrm{Re}\,F_\alpha)|dz_\alpha|
\end{equation}
 on $U_\alpha$ for a suitable holomorphic primitive $F_\alpha$ of $\eta_\alpha$.
 Let $p_k$ be a pole of $\nabla$. Let $(U_k,z_k)$ be the chart centered at $p$. Let
 $$\eta_k=\left(\frac{c_k}{z_k}+f_k\right)dz_k$$
 be the representation of $\nabla$ on $U_k$, where $f_k:U_k\to \mathbb{C}$ is a holomorphic function and $c_k:=\mathrm{Res}_{p_k}\nabla$. Let $V\subset U_k\setminus\{p_k\}$ be a simply connected open set. Then by Proposition \ref{localmetric} for a suitable holomorphic primitive $K_k$ of $\eta_k$ on $V$ we have
 $$g^{\frac12}=\exp(\mathrm{Re}\,K_k)|dz_k|.$$
 Then $K_k=c_k\log z_k+ F_k$ for a holomorphic primitive $F_k$ of $f_k$ on $V$. Hence
 $$g^{\frac{1}{2}}=|z_k|^{c_k}|e^{F_k}dz_k|$$
on $V$. Since $F_k$ is a holomorphic  primitive of $f_k$ on $V$ and $f_k$ is a holomorphic function on $U_k$ there exists a holomorphic primitive $\tilde{F}_k$ of $f_k$ on $U_k$ such that $\tilde{F}_k|_V=F_k$.  Consequently, we have
$$g^{\frac{1}{2}}=|z_k|^{c_k}|e^{\tilde{F}_k}dz_k|$$
on $U_k$. Hence
$$\lim_{z_k\to 0}\frac{|z_k|^{c_k}|e^{\tilde{F}_k}|}{|z_k|^{c_k}}=e^{\tilde{F}(0)}.$$
By definition of singular flat metrics we can see that $p_k$ is  a conical singularity  for $g$ with order $c_k$. Hence $g$ has a continuation as a singular flat metric to any pole of $\nabla$. Since $g$ is adapted to $\nabla$ on $S^o$ it is adapted to $\nabla$ on $S$.

Let $\tilde{g}$ be another singular flat metric adapted to $\nabla$. Let $\eta_\alpha$ be the representation of $\nabla$ on $(U_\alpha,z_\alpha)$. For suitable holomorphic primitives $F_\alpha$ and $\tilde{F}_\alpha$ of $\eta_\alpha$ we have
$$g^{\frac12}_\alpha=\exp(\mathrm{Re}\,F_\alpha)|dz_\alpha|$$
and
$$\tilde{g}^{\frac12}_\alpha=\exp(\mathrm{Re}\,\tilde{F}_\alpha)|dz_\alpha|$$
on $U_\alpha$. Since $F_\alpha$ and $\tilde{F}_\alpha$ holomorphic primitives of $\eta_\alpha$ there exists $C_\alpha\in\mathbb{C}$ such that
$$F_\alpha=\tilde{F}_\alpha+C_\alpha.$$
Hence $g^{\frac12}_\alpha=|e^{C_\alpha}|\tilde{g}^{\frac12}_\alpha$ on $U_\alpha$. Let $(U_\beta,z_\beta)$ be a chart with $U_\alpha\cap U_\beta\ne\emptyset$. Then we have $g^{\frac12}_\alpha=g^{\frac12}_\beta$ on $U_\alpha\cap U_\beta$, and it is equivalent to
$$\left|e^{C_\alpha}\right|\tilde{g}^{\frac12}_\alpha=\left|e^{C_\beta}\right|\tilde{g}^{\frac12}_\beta.$$
Hence $\left|e^{C_\alpha}\right|=\left|e^{C_\beta}\right|=r$ for some $r>0$. Consequently, we have $g=r^2\tilde{g}$. Hence $g$ is unique up to positive constant multiple.

On the other hand, let $g$ be a singular flat metric on $S$ and $\Sigma$ its singular set. Set $S^o=S\setminus\Sigma$.  Let $\{(U_\alpha,z_\alpha)\}\cup\{(U_k,z_k)\}$ be a Leray atlas adapted to  $(S^o,\Sigma)$. Since $g$ is flat on $S^o$ for each $\alpha$ there exists a harmonic function $u_\alpha$ on $U_\alpha$ such that $g_\alpha^{\frac{1}{2}}=e^{u_\alpha}|dz_\alpha|$ on $U_\alpha$. Furthermore, one has
  \begin{equation}\label{flat1}
    e^{u_\alpha-u_\beta}=|\xi_{\alpha\beta}|
  \end{equation}
  on $U_\alpha\cap U_\beta$, where $\xi_{\alpha\beta}=\frac{dz_\alpha}{dz_\beta}$. Since $U_\alpha$ is simply connected, there exists dual harmonic functions $v_\alpha$ for $u_\alpha$ on $U_\alpha$ such that $f_\alpha=u_\alpha+i v_\alpha$ defines a holomorphic function on $U_\alpha$. Set
  \begin{equation}\label{flattomer}
      \eta_\alpha:=df_\alpha
   \end{equation}
  on $U_\alpha$; we claim that the $\eta_\alpha$'s are representatives of a holomorphic connection $\nabla$ on $S^o$. By \eqref{flat1} we have $u_\alpha-u_\beta=\log|\xi_{\alpha\beta}|.$
  Since $\xi_{\alpha\beta}:U_\alpha\cap U_\beta\to \mathbb{C}$ is a nonzero holomorphic function then $F:=f_\alpha-f_\beta-\log\xi_{\alpha\beta}$
   is a holomorphic function on $U_\alpha\cap U_\beta$ with $\mathrm{Re}\, F=0$. Hence there exists $C\in \mathbb{R}$ such that $F=iC$, i.e.,  $f_\alpha-f_\beta=\log\xi_{\alpha\beta}+iC$.
  After differentiating the last equality we have
  $$\eta_\alpha-\eta_\beta=\frac{d\xi_{\alpha\beta}}{\xi_{\alpha\beta}}$$
  which is \eqref{mer1}. We have defined a holomorphic connection $\nabla$ on $S^o$. It is not difficult to see that $g$ is adapted to $\nabla$ on $S^o$. By Theorem \ref{realperiod} it follows that $\nabla$ has real periods on $S^o$.

Let  $p_k$ be a conical singularity of order  $c_k$ of the flat metric $g$ and  $(U_k,z_k)$  the chart of the atlas centered at $p_k$.  Then by Lemma \ref{l1} there exists a holomorphic function $F_k$ on $U_k$ such that the singular flat metric is $$g^{\frac12}=|z_k|^{c_k}\left|e^{F_k}dz_k\right|$$ on $U_k$.  Set
   $$\eta_k:= \left(\frac{c_k}{z_k}+F_k'\right)dz_k$$
  on $U_k$. We  claim that we can extend $\nabla$ to a meromorphic connection represented by $\eta_k$ on $U_k$. We have to check that this definition satisfies condition \eqref{mer1}. Let $(U_\alpha,z_\alpha)$ be a chart with $U_\alpha\cap U_k\ne \emptyset$.  Then there exists a holomorphic map $F_\alpha: U_\alpha\to \mathbb{C}$ such that $g^{\frac12}=|e^{F_\alpha}dz_\alpha|$.
 Let $V\subseteq U_\alpha\cap U_k$ be a simply connected open set. Then by definition of flat metric we have
    $$|z_k|^{c_k}|e^{F_k}|=|e^{F_\alpha}\xi_{\alpha k}|$$
    on $V$, where $\xi_{\alpha k}:=\frac{dz_\alpha}{dz_k}$. This is equivalent to
    $$c_k\log |z_k|+\mathrm{Re}\, F_k=\mathrm{Re}\,F_\alpha+\log|\xi_{\alpha k}|.$$
   Since $V$ is simply connected it is not difficult to see that there exists a constant $C\in\mathbb{C}$ such that
$$c_k\log z_k+F_\beta-F_k-\log \xi_{\alpha k}=C$$
on $V$. Consequently,
\begin{equation}\label{mer3}
  \eta_k=\eta_\alpha+\frac{d\xi_{\alpha k}}{\xi_{\alpha k}}
\end{equation}
 on $ V$. Since $\eta_k$, $\eta_\alpha$ and $\xi_{\alpha k}$ are well defined on $U_\alpha\cap U_k$ and  the equality \eqref{mer3} holds on any simply connected subset of $U_\alpha\cap U_k$ then \eqref{mer3} holds on $U_\alpha\cap U_k$. Hence we have extended $\nabla$ to a Fuchsian meromorphic connection on $S$. It is easy to see that, if $p_k$ is a conical singularity of order  $c_k$ of the flat metric $g$ then $\mathrm{Res}_{p_k}\nabla=c_k.$

  Let $\tilde{\nabla}$ be another meromorphic connection adapted to $g$. Let $\eta_\alpha$ and $\tilde{\eta}_\alpha$ be the representations of $\nabla$ and $\tilde{\nabla}$ on a chart $(U_\alpha,z_\alpha)$ respectively. Then by \eqref{flattomer} there exists holomorphic primitives $F_\alpha$ and $\tilde{F}_\alpha$ of
$\eta_\alpha$ and $\tilde{\eta}_\alpha$ respectively we have
$$g^{\frac12}=|e^{F_\alpha}dz_\alpha|=|e^{\tilde{F}_\alpha}dz_\alpha|$$
on $U_\alpha$. Hence there exists a constant $C\in\mathbb{C}$ such that $F_\alpha\equiv\tilde{F}_\alpha+C$. Consequently, $\eta_\alpha\equiv \tilde{\eta}_\alpha.$  Hence $\nabla=\tilde{\nabla}$ on $S^o$. Let now $(U_k,z_k)$ be a chart around a pole $p_k$. Let $\eta_k$ and $\tilde{\eta}_k$ be the representations of $\nabla$ and $\tilde{\nabla}$ on $U_k$ respectively. Since $\nabla=\tilde{\nabla}$ on $S^o$ we have
$\eta_k\equiv\tilde{\eta}_k$ on $U_k\setminus\{p_k\}$. Since  $\eta_k$ has a unique extension to $U_k$ we have $\eta_k\equiv\tilde{\eta}_k$ on $U_k$. Thus $\nabla=\tilde{\nabla}$ on $S$. Hence $\nabla$ is the unique meromorphic connection adapted to $g$.

\end{proof}

\begin{corollary} Let  $\nabla$ be a  meromorphic connection on a Riemann surface $S$ with real periods. If $g$ is a singular flat metric adapted to $\nabla$, then a conical singularity of $2\alpha \pi$ of $g$ corresponds to a Fuchsian pole of residue $\alpha-1$ of $\nabla$.
\end{corollary}

\section{Meromorphic $k$-differentials}\label{chapter3}

The main purpose of this section is to describe the relation between meromorphic $k$-differentials and meromorphic connections. We shall show that any meromorphic $k$-differential canonically defines a meromorphic connection. Meromorphic  $k$-differentials are studied by many authors (see for example \cite{BCG,Sch,ST,T}). In this section we recall some results on the theory of meromorphic $k$-differentials.
 \begin{definition}\label{kdiftarifi}
    Let $k\in\mathbb{N}$. Let $\{(U_\alpha,z_\alpha)\}$ be a holomorphic atlas for a Riemann surface $S$. A  \emph{meromorphic $k$-differential} $\phi$ on $S$ is a set of meromorphic function elements $\phi_\alpha$ defined in local charts $(U_\alpha,z_\alpha)$ for which the following transformation law holds:
   \begin{equation}\label{kdifffffffff}
    \phi_\beta(p)=\left(\frac{d z_\alpha}{ d z_\beta}(p)\right)^k\phi_\alpha(p), \ \ p\in U_\alpha\cap U_\beta
   \end{equation}

     \end{definition}
   Globally, a $k$-differential is a global meromorphic section of the line bundle $(T^*S)^{\otimes k}$.

\begin{definition}
  Let $\phi$ be a meromorphic $k$-differential on a Riemann surface $S$ and $p$ a zero (pole) of $\phi$.  The \emph{order} of $p$ is the order of $z_\alpha(p)$ for any  $\phi_\alpha$ representing $\phi$ on a local chart $(U_\alpha,z_\alpha)$ around $p$.
\end{definition}
Indeed, $\frac{dz_\alpha}{dz_\beta}$ is a never vanishing holomorphic function on $U_\alpha\cap U_\beta$, and hence \eqref{kdifffffffff} yields that   $\phi_\alpha$ and $\phi_\beta$ have the same order of zero (respectively, pole) at $p$.

  \begin{definition}
     The \textit{critical points} of a meromorphic $k$-differential $\phi$ on a Riemann surface $S$ are its zeroes and poles. All other points of $S$ are \textit{regular points} of $\phi$.  If $\phi$ has no poles then we say that $\phi$ is a \emph{holomorphic} $k$-differential.   If all points are regular points for $\phi$ then we say that $\phi$ is a \emph{regular} $k$-differential.
    \end{definition}
    \begin{remark}
      If $\phi$ is a $k$-differential on an arbitrary Riemann surface $S$. The restriction of $\phi$ on $S^o$ is a regular $k$-differential where $S^o$ is the complement of the critical set of $\phi$.
    \end{remark}
\begin{proposition}\label{localregular}
  Let $\phi$ be a regular $k$-differential on a Riemann surface $S$. Then there exists an atlas where local coordinates are antiderivatives of $k^{th}$ roots of the $k$-differential, these coordinates are defined up to addition of a constant and multiplication by $e^{\frac{2i\pi}{k}}$. In other words, transition maps are generated by translations and rotations of order $k$.
  \end{proposition}
\begin{proof}
Since $\phi$ is a regular $k$-differential, it is not difficult to see that for any $p\in S$ there exists a chart centered at $p$ such that the representation of $\phi$ is identically equal to one. By the condition \eqref{kdifffffffff} we can see that the transition maps of the atlas consisting of the charts are generated by translations and rotations of order $k$.
\end{proof}

As a consequence, this geometric structure preserves a flat metric.
\begin{proposition}[{see for example \cite{ST}}]\label{qdsfm}
  Let $\phi$ be a meromorphic $k$-differential on a Riemann surface $S$ and $\{(U_\alpha,z_\alpha)\}$ an atlas for $S$.  Then there exists a singular flat metric $g$ on $S$ locally given by
   \begin{equation}\label{adaptedkdtog}
     g^{\frac{1}{2}}=|\phi_\alpha|^{\frac1k}|dz_\alpha|
   \end{equation}
   on $U_\alpha$, where $\phi_\alpha$ is the local representation of $\phi$ on $U_\alpha$.
\end{proposition}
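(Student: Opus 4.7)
The plan is to verify that the locally defined expressions $|q_\alpha|^{1/k}|dz_\alpha|$ glue to a well-defined conformal factor on $S$, that they define a flat metric off the critical set $\Sigma$ of $q$, and that they satisfy the required asymptotic condition at each critical point.

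First I would check compatibility on overlaps. From the transformation law \eqref{kdifffffffff} one gets $|q_\beta|^{1/k} = |q_\alpha|^{1/k}|dz_\alpha/dz_\beta|$, and hence $|q_\beta|^{1/k}|dz_\beta| = |q_\alpha|^{1/k}|dz_\alpha|$ on $U_\alpha \cap U_\beta$. This shows that the collection $\{|q_\alpha|^{1/k}|dz_\alpha|\}$ defines a global conformal length element $g^{1/2}$ on $S \setminus \Sigma$, where $\Sigma$ is the (discrete) set of zeros and poles of $q$.

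Next I would establish flatness on $S^o := S \setminus \Sigma$. Fix a point $p_0 \in S^o$ and a simply connected chart $(U_\alpha, z_\alpha)$ at $p_0$ with $U_\alpha \cap \Sigma = \emptyset$. Then $q_\alpha$ is holomorphic and nowhere vanishing on $U_\alpha$, so there exists a holomorphic logarithm $L_\alpha$ with $\exp(L_\alpha) = q_\alpha$. Setting $F_\alpha := L_\alpha/k$, we get
\begin{equation*}
g^{1/2} = |q_\alpha|^{1/k}|dz_\alpha| = \exp(\mathrm{Re}\, F_\alpha)\,|dz_\alpha|,
\end{equation*}
so the conformal factor $u_\alpha := \mathrm{Re}\, F_\alpha$ is harmonic on $U_\alpha$. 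Hence $g$ has identically vanishing Gaussian curvature on $S^o$, and is a flat metric there.

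Finally I would verify the asymptotic condition defining a singular flat metric at every $p \in \Sigma$. Pick a chart $(U, z)$ centered at $p$ with $U \cap \Sigma = \{p\}$. Since $p$ is a zero or pole of $q_\alpha$, one can write $q_\alpha(z) = z^n h(z)$ where $n \in \mathbb{Z}$ is the order ($n > 0$ for a zero, $n < 0$ for a pole) and $h$ is holomorphic on $U$ with $h(0) \neq 0$. After possibly shrinking $U$, $h$ is nowhere zero on $U$, so there is a holomorphic branch $H$ of $(1/k)\log h$ on $U$; then on $U \setminus \{p\}$
\begin{equation*}
g^{1/2} = |z|^{n/k}\exp(\mathrm{Re}\, H)\,|dz|.
\end{equation*}
Writing $u := \mathrm{Re}\, H + (n/k)\log|z|$, the function $u$ is harmonic on $U \setminus \{p\}$, and
\begin{equation*}
\lim_{z \to 0}\frac{\exp(u)}{|z|^{n/k}} = \exp(\mathrm{Re}\, H(0)) = |h(0)|^{1/k} > 0.
\end{equation*}
This shows $p$ is a critical point of $g$ with residue $c_p = n/k$ and $b_p = |h(0)|^{1/k} > 0$, as required by the definition of singular flat metric. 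Putting the three steps together yields the result. The only mildly delicate point is arranging a single holomorphic branch of $\log q_\alpha$ (or $\log h$) on a suitable simply connected neighborhood, but this is routine once $U$ is chosen small enough.
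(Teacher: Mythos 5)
Your proof is correct and is the standard argument; the paper itself gives no proof of this proposition, simply citing Strebel, and what you have written is exactly the expected verification (gluing via the transformation law \eqref{kdifffffffff}, harmonicity of $\tfrac{1}{k}\mathrm{Re}\log q_\alpha$ off the critical set, and the asymptotic $|z|^{n/k}$ behaviour at a zero or pole of order $n$, giving residue $c_p=n/k$). The computations at each step check out, including the identification $b_p=|h(0)|^{1/k}>0$ required by the paper's definition of a singular flat metric.
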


We can see from Proposition \ref{qdsfm} that in the flat metric induced by a $k$-differential, a singularity of order $a>-k$ is a conical singularity of angle $\frac{2\pi(a+k)}{k}$.

\begin{definition}
  Let $\phi$ be a meromorphic $k$-differential on a Riemann surface $S$ and $g$ is the singular flat metric defined as in \eqref{adaptedkdtog}. Set $S^o:=S\setminus\Sigma$, where $\Sigma$ is the set of critical points of $\phi$. A smooth curve $\sigma:[0,\varepsilon)\to S^o$ is a \emph{geodesic} for $\phi$ if it is a geodesic for $g$.
\end{definition}
\begin{definition}
 Let $\phi$ be a meromorphic $k$-differential, $g$ a singular flat metric. We say $\phi$ and $g$ are \emph{adapted} to each other if they have the same geodesics.
\end{definition}

\subsection[Meromorphic connections \& $k$-differentials ]{Relation between meromorphic connections and meromorphic $k$-differentials}\label{kdiffandmerconsec}

\begin{definition}
  Let $\nabla$ be a meromorphic connection and $\phi$  a meromorphic $k$-differential on a Riemann surface $S$.  We say that $\nabla $ and $\phi$ are \emph{adapted} to each other if there exists a singular flat metric $g$ such that $g$ is adapted to $\nabla$ and $\phi$.
\end{definition}

First of all, we describe the relation between regular $k$-differentials and holomorphic connections on a Riemann surface $S$.
\begin{theorem}\label{t4}
  Let $\phi$ be a regular $k$-differential on a Riemann surface $S$. Then there exists a unique holomorphic connection   $\nabla$  on $S$ with monodromy in $\mathbb{Z}_k$  such that $\nabla$ is adapted to $\phi$, where $\mathbb{Z}_k$ is the multiplicative group of $k$-th roots of unity.

  On the other hand, if $\nabla$ is a holomorphic connection with monodromy in $\mathbb{Z}_k$  then there exists a regular $k$-differential $\phi$ adapted to $\nabla$. Moreover,   $k$-differential is unique up to a non-zero constant  multiple.

\end{theorem}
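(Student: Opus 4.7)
My plan is to read the two directions of the theorem as a direct matching of atlases: Proposition \ref{localregular} on the $k$-differential side and Lemma \ref{nablaatlas} on the connection side both produce an atlas with affine transitions $z_\beta = a_{\alpha\beta}z_\alpha + c_{\alpha\beta}$ whose linear parts $a_{\alpha\beta}$ lie in $\mathbb{Z}_k$. The same atlas will serve simultaneously as a ``$q_\alpha\equiv 1$'' atlas and as a $\nabla$-atlas with monodromy in $\mathbb{Z}_k$, and everything else follows from this observation.

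For $q\Rightarrow\nabla$, I would take the atlas of Proposition \ref{localregular} (passing to a Leray refinement, which preserves the affine form of the transitions), declare every chart to be a $\nabla$-chart by setting $\eta_\alpha\equiv 0$, and note that the cocycle condition \eqref{mer1} is automatic because $\xi_{\alpha\beta}=a_{\alpha\beta}^{-1}$ is locally constant. Lemma \ref{nablaatlas} then gives monodromy in $\mathbb{Z}_k$, and adaptedness is witnessed by the flat metric $|dz_\alpha|=|q_\alpha|^{1/k}|dz_\alpha|$ from Proposition \ref{qdsfm}, which satisfies \eqref{metric1} trivially with $\eta_\alpha\equiv 0$. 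The direction $\nabla\Rightarrow q$ runs the same argument in reverse: Lemma \ref{nablaatlas} furnishes a Leray $\nabla$-atlas with linear parts in $\mathbb{Z}_k$, on which I set $q_\alpha\equiv 1$; the transformation law \eqref{kdifffffffff} reduces to the identity $a_{\alpha\beta}^{-k}=1$, which is the defining property of $\mathbb{Z}_k$, so $q$ is globally well-defined and regular, with the same metric certifying adaptedness.

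The main obstacle I anticipate is the two uniqueness claims, both of which I expect to deduce from a single auxiliary fact that I would record first: on a simply connected chart $(U,z)$, a flat metric $g=\rho(z)|dz|$ whose geodesics are Euclidean segments must have $\rho$ constant. Writing locally $\rho=|e^F|$ with $F$ holomorphic, the flat coordinate $w=\int e^F\,dz$ sends $g$-geodesics to Euclidean segments in $w$; for Euclidean segments in $z$ to map to Euclidean segments in $w$ the map $w(z)$ must be affine, forcing $e^F$ to be constant. Given this, if $\tilde\nabla$ is another holomorphic connection adapted to $q$, then any metric shared by $\tilde\nabla$ and $q$ has Euclidean-segment geodesics on each chart of the Proposition \ref{localregular} atlas and must therefore equal $c\cdot|dz_\alpha|$ for a global positive constant $c$; Proposition \ref{localmetric} then forces $\tilde\nabla$ to have zero representation on that atlas, so $\tilde\nabla=\nabla$. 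Symmetrically, any $\tilde q$ adapted to $\nabla$ has its canonical metric $|\tilde q_\alpha|^{1/k}|dz_\alpha|$ sharing geodesics with the Euclidean metric on the $\nabla$-atlas, so each $|\tilde q_\alpha|^{1/k}$ is constant, hence each holomorphic $\tilde q_\alpha$ is a constant $\lambda_\alpha$; the transformation law and connectedness of $S$ then collapse the $\lambda_\alpha$'s into a single nonzero constant $\lambda$, giving $\tilde q=\lambda\cdot q$.
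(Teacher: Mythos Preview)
Your proof is correct, and for the existence claims it is essentially the same atlas-matching idea as in the paper, just stripped down: you set $\eta_\alpha\equiv 0$ (resp.\ $q_\alpha\equiv 1$) on the special atlas and read off the transformation laws, whereas the paper first passes through Theorem~\ref{t8} and records the explicit relation $\eta_\alpha=\tfrac{1}{k}\,dq_\alpha/q_\alpha$ on an arbitrary atlas before specializing. That formula is not needed here, but note that the paper relies on it in the next theorem (Theorem~\ref{t6}) to extend across the critical points, so you would eventually want it anyway.

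Where you genuinely diverge is in the uniqueness arguments. The paper dispatches both uniqueness claims by citing Theorem~\ref{t8}: a singular flat metric determines its adapted connection uniquely, and two metrics adapted to the same connection differ by a positive constant, from which $|q_1/q_2|=\text{const}$ and hence $q_1=\lambda q_2$. You instead isolate the geometric fact that a flat conformal metric on a chart whose geodesics are exactly the Euclidean segments must have constant conformal factor (equivalently, a holomorphic local diffeomorphism sending lines to lines is affine), and then run both uniqueness arguments through that lemma. Your route is self-contained and avoids Theorem~\ref{t8}, at the cost of reproving a special case of what Theorem~\ref{t8} already encodes; the paper's route is shorter given the machinery in place. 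Either approach is fine. One small remark: in your uniqueness of $\nabla$, make explicit that the intermediate metric witnessing adaptedness of $\tilde\nabla$ to $q$ is automatically flat (because $\tilde\nabla g\equiv 0$ forces flatness, cf.\ Proposition~\ref{localmetric}), since your auxiliary lemma needs flatness to write $\rho=|e^F|$.
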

\begin{proof}
  Let $\phi$ be a regular quadratic differential on $S$ and $g$ the  flat metric adapted to $\phi$. Due to Theorem \ref{t8} there exists a unique holomorphic connection $\nabla$ on $S$ such that $g$ is adapted to $\nabla$. Hence $\phi$ is adapted to $\nabla$.  It is also possible to write the relation between the local representations of $\nabla$ and $\phi$. Let $\{(U_\alpha,z_\alpha)\}$ be an atlas for $S$. By Proposition \ref{qdsfm}
we have  $$g^{\frac12}=|\phi_\alpha|^{\frac{1}{k}}|dz_\alpha|$$
   on $U_\alpha$, where $\phi_\alpha$ is the representation of $\phi$ on $U_\alpha$.  Then by \eqref{flattomer} we can see that the local representation of $\nabla$ is given by
  \begin{equation}\label{etavaq}
    \eta_\alpha=\frac{d\phi_\alpha}{k\phi_\alpha}
  \end{equation}
  on $U_\alpha$, where $\phi_\alpha$ is the representation of  $\phi$ on $U_\alpha$.

   Thanks to Proposition \ref{localregular} there exists an atlas $\{(U_\alpha,z_\alpha)\}$  such that the representations of $\phi$ are identically $1$ and the transition functions  are given by
   \begin{equation}\label{transitionfunknabla}
     z_\beta=a_{\alpha\beta} z_\alpha+c_{\alpha\beta}
   \end{equation}
   on $U_\alpha\cap U_\beta$, where $a_{\alpha\beta}\in\mathbb{Z}_k$ and $c_{\alpha\beta}$ is a complex number. By \eqref{etavaq} we can see that the representations $\eta_\alpha$ of $\nabla$ on those charts are identically zero. Hence $\{(U_\alpha,z_\alpha)\}$ is a $\nabla$-atlas. Note that transition functions are given by \eqref{transitionfunknabla}, i.e.,  $\xi_{\alpha\beta}=\frac{dz_\alpha}{dz_\beta}\in\mathbb{Z}_k$.  Consequently, by Lemma \ref{nablaatlas}  $\nabla$ has monodromy in  $\mathbb{Z}_k$.

 On the other hand, let now $\nabla$ be a holomorphic connection with monodromy in $\mathbb{Z}_k$. Then we can find an atlas  $\{(U_\alpha,z_\alpha)\}$  for $S$, holomorphic primitives $K_\alpha$ of $\eta_\alpha$, numbers $\tilde\xi_{\alpha\beta}\in\mathbb{Z}_k$ and constants $c_\alpha\in\mathbb{C}^*$ such that setting $\xi_{\alpha\beta}=\frac{dz_\alpha}{dz_\beta}$ we have
$$
   \frac{e^{K_\alpha}}{e^{K_\beta}} \xi_{\alpha\beta}=\frac{c_\beta}{c_\alpha} \tilde{\xi}_{\alpha\beta}.
$$
 Hence we have
 \begin{equation}\label{mcmpmon} {c^k_\beta}{e^{kK_\beta}}=c^2_\alpha{e^{kK_\alpha}} \xi_{\alpha\beta}^2;\end{equation}
 set
 \begin{equation}\label{con2}
   \phi_\alpha =c_\alpha^ke^{kK_\alpha}
 \end{equation}
 on $U_\alpha$.  Take any charts $U_\alpha$ and $U_\beta$  such that $U_\alpha\cap U_\beta\neq \emptyset$.  Then \eqref{mcmpmon} implies
  $$\phi_\beta=\phi_\alpha\left(\frac{dz_\alpha}{dz_\beta}\right)^k$$
  on $U_\alpha\cap U_\beta$. Hence \eqref{con2} defines a meromorphic $k$-differential $\phi$. It is not difficult to see that $\phi$ is adapted to $\nabla$.

 Let $\phi_1$ and $\phi_2$ be meromorphic $k$-differentials adapted to $\nabla$. Let $g_1$ and $g_2$ be the singular flat metrics adapted to $\phi_1$ and $\phi_2$ respectively. Then $g_1$ and $g_2$ are adapted to $\nabla$.  By Theorem \ref{t8} there exists a constant $c>0$ such that $g_1=c g_2$.
  Hence  $\phi$ is  unique up to a non-zero constant multiple.

\end{proof}
Now we are ready to prove Theorem \ref{kvamermain}.
\begin{proof}[{Proof of Theorem \ref{kvamermain}}]
  Let $\phi$ be a meromorphic $k$-differential on $S$ and $\Sigma$ its singular set. By the previous theorem there exists a holomorphic connection $\nabla$ on $S^o:=S\setminus\Sigma$ which has monodromy in $\mathbb{Z}_k$ such that $\phi$ is adapted to $\nabla$.

Let $\{(U_\alpha,z_\alpha)\}\cup \{(U_h,z_h)\}$ be a Leray atlas adapted to $(S^o,\Sigma)$.  Let $p$ be a critical point of the $k$-differential $\phi$ with order $m\in\mathbb{Z}$ and $(U_h,z_h)$ the chart of the atlas centered at $p$.  Since the order of the critical point $p$ is $m$ up to shrink $U_h$ there exists a never vanishing holomorphic function $H_h:U_h\to \mathbb{C}$ such that
  $$\phi_h=z_h^m H_h$$
   where $\phi_h$ is the representation of $\phi$ on $U_h$.
   Let  $\eta_h$ be  the representation of $\nabla$ on $U_h\setminus\{p\}$. By \eqref{etavaq} we have
  \begin{equation}\label{exten}
    \eta_h=\frac{mz_h^{m-1}H_h+z^m_h H_h'}{kz_h^m H_h}dz_h=\left(\frac{m}{kz_h}+\frac{H_h'}{kH_h} \right)dz_h.
  \end{equation}
  Hence $\eta_h$ has a meromorphic extention to $p$  with the residue $\frac{m}{k}$ at $p$. Let $(U_\alpha,z_\alpha)$ be a chart of $S^o$ such that $U_\alpha\cap U_h\ne \emptyset$. Indeed, the condition \eqref{merrepresentation} holds on $U_\alpha\cap U_h$ because $\nabla$ is a holomorphic connection on $S^o$.  By extending local representations of the holomorphic connection $\nabla$  we can extend $\nabla$ to whole surface $S$ as a meromorphic connection $\tilde{\nabla}$. By \eqref{exten} we can see that resulting meromorphic connection $\tilde{\nabla}$ is Fuchsian with residues in  $\frac{1}{k}\mathbb{Z}$. Since the extension is unique $\tilde{\nabla}$ is a unique meromorphic connection adapted to $\phi$.

Now, suppose $\nabla$ is a Fuchsian meromorphic connection with monodromy in $\mathbb{Z}_k$ and  residues in $\frac{1}{k}\mathbb{Z}$. Let $\Sigma$ be the set of poles of $\nabla$. Set $S^o=S\setminus\Sigma$. By the previous theorem there exists a regular $k$-differential $\phi$ on $S^o$ adapted to $\nabla$.

 Let $\{(U_\alpha,z_\alpha)\}\cup \{(U_h,z_h)\}$ be a Leray atlas adapted to $(S^o,\Sigma)$. Let $p$ be a pole for $\nabla$ and $\mathrm{Res}_p\nabla=\frac{m}{k}$ for some $m\in\mathbb{Z}$. Let $(U_h,z_h)$ be the chart centered at $p$. Then the local representation $\eta_h$ on $U_\alpha$ has the form
$$\eta_h=\left(\frac{m}{kz_h}+f_h\right) dz_h,$$
where $f_h:U_h\to \mathbb{C}$ is a holomorphic function. Let $\phi_h$ be the representation of $\phi$ on $U_h\setminus\{p\}$ and $V_\alpha\subset U_h\setminus\{p\}$ a simply connected open set. By \eqref{con2} we have
$$\phi_h|_{V_\alpha}=e^{m\log z_h+kF_\alpha(z_h)}= z_h^{m}e^{kF_\alpha(z_h)},$$
 where  $F_\alpha$ is a suitable  holomorphic  primitive of $f_h$ on $V_\alpha$. Since $U_h$ is simply connected and $f_h$ is a holomorphic function on $U_h$ we can see that $F_\alpha$ has unique holomorphic extension to $U_h$, i.e, there exists a holomorphic primitive $F_h$ of $f_h$ on $U_h$ such that $F_h|_{V_\alpha}=F_\alpha$. Consequently, $ z_h^{m}e^{kF_\alpha}$ has a unique holomorphic extension to
 $U_h\setminus\{p\}$ and we have
 $$\phi_h= z_h^{m}e^{kF_h}.$$
   It is easy to see that $\phi_h$ can be extended to $U_h$ as a meromorphic (or holomorphic if $m\in\mathbb{N}$) function $\tilde{q}_h$. Let  $(U_\alpha,z_\alpha)$ be a simply connected chart of $S^o$ with $U_\alpha\cap U_h\ne \emptyset$. Then the transformation rule \eqref{kdifffffffff} holds because $\phi$ is a regular $k$-differential on $S^o$ and $\tilde{q}_h$ coincides with the representation $\phi_h$ on $U_h\setminus\{p\}$. Consequently, by extending the local representations of $\phi$ we can extend $\phi$ to $S$.
   Since the extension is unique $\phi$ is unique up to a non-zero constant multiple.
\end{proof}

\section{The canonical cover}\label{canonicalcovering}

Let $\nabla$ be a meromorphic connection on a compact Riemann surface $S$. In this section we introduce the notion of canonical cover  induced by $\nabla$ (see \cite[Section 2]{Lan} for quadratic differentials and \cite[Section 2.1]{BCG}  for $k$-differentials).

\begin{theorem}\label{canonicalcover}
Let $S$ be a compact Riemann surface and $\Sigma$ a finite set. Let $\nabla$ be a holomorphic connection on  $S^o:=S\setminus\Sigma$. Assume that $\nabla$ has monodromy in $G$ and that there exists $k\in\mathbb{N}^*$ so that $\arg G^k=\{0\}$. Then there exists a compact Riemann surface $\hat{S}$, a possibly ramified (at the poles for which the argument of the monodromy coefficient is not trivial) covering $\pi: \hat{S}\to S$, and a holomorphic connection $\hat{\nabla}$ on $\hat{S}^0=\hat{S}\setminus\hat{\Sigma}$, where $\hat{\Sigma}=\pi^{-1}(\Sigma)$, such that
 \begin{enumerate}
   \item a smooth curve $\hat{\sigma}:[0,\varepsilon)\to \hat{S}^o$ is a geodesic for $\hat{\nabla}$ if and only if $ \sigma=\pi\circ\hat{\sigma}$ is a geodesic for $\nabla$;
   \item any self-intersecting geodesic of $\hat\nabla$ is closed.
 \end{enumerate}

 \end{theorem}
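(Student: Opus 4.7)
The plan is to construct a finite, possibly ramified, covering $\pi: \hat{S} \to S$ such that the pulled-back connection $\hat{\nabla} := \pi^{*}\nabla$ has monodromy contained in $\mathbb{R}^{+} \subset \mathbb{C}^{*}$. Once this is available, assertion (1) is immediate because $\pi|_{\hat{S}^{o}}$ is a local biholomorphism onto $S^{o}$, so the coordinate form \eqref{geodesicequation} of the geodesic equation transfers in both directions. For assertion (2) the monodromy group $\hat{G}$ of $\hat{\nabla}$ would then satisfy $\arg \hat{G} = \{0\}$, hence a fortiori $\arg \hat{G}^{2} = \{0\}$, and Theorem \ref{intersectionanglenonself} yields that every self-intersecting geodesic of $\hat{\nabla}$ is closed.

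To produce the cover I would first extract the finite \emph{argument part} of the monodromy. Let $\rho \colon \pi_{1}(S^{o}) \to G$ be the monodromy of $\nabla$. The hypothesis $\arg G^{k} = \{0\}$ forces every $g \in G$ to satisfy $k \arg g \equiv 0 \pmod{2\pi}$, so $\arg$ sends $G$ into the cyclic subgroup of $S^{1}$ formed by the $k$-th roots of unity. Hence the composition $\varphi := \arg \circ \rho \colon \pi_{1}(S^{o}) \to \mathbb{Z}_{k}$ is a homomorphism into a finite group, and $H := \ker \varphi$ is a normal subgroup of $\pi_{1}(S^{o})$ of finite index dividing $k$, with
\[
\rho(H) \;\subseteq\; G \cap \ker(\arg) \;=\; G \cap \mathbb{R}^{+}.
\]

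Next I would realize $H$ geometrically. Let $\tilde{\pi} \colon \tilde{S}^{o} \to S^{o}$ be the unramified covering associated with $H$. Over a small punctured disc $D_{p}^{*}$ around each pole $p \in \Sigma$ the restriction $\tilde{\pi}^{-1}(D_{p}^{*}) \to D_{p}^{*}$ is a finite cover, so it splits as a disjoint union of connected cyclic covers, each isomorphic to the model $w \mapsto w^{n}$. Filling in the missing centre of each component yields a compact Riemann surface $\hat{S}$ and a ramified cover $\pi \colon \hat{S} \to S$ extending $\tilde{\pi}$, branched only over $\Sigma$; this is the standard completion of a finite topological covering of a punctured compact Riemann surface, provided by the Riemann existence theorem. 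Set $\hat{\Sigma} := \pi^{-1}(\Sigma)$ and $\hat{S}^{o} := \hat{S} \setminus \hat{\Sigma}$. Since $\pi|_{\hat{S}^{o}}$ is a local biholomorphism onto $S^{o}$, the pullback $\hat{\nabla} := \pi^{*}\nabla$ is a holomorphic connection on $\hat{S}^{o}$, and its monodromy representation is $\rho \circ (\pi|_{\hat{S}^{o}})_{*}$ on $\pi_{1}(\hat{S}^{o}) = \pi_{1}(\tilde{S}^{o})$, whose image is exactly $\rho(H) \subseteq \mathbb{R}^{+}$. Both assertions of the theorem then follow as described in the first paragraph.

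The main technical obstacle is the completion step at the poles: one must verify that gluing finitely many points over each $p \in \Sigma$ genuinely produces a compact Riemann surface and a holomorphic branched cover $\pi$, and that the extended connection $\hat{\nabla}$ has no spurious poles on $\hat{S}^{o}$. The first fact is the usual filling-in construction for finite covers of punctured discs, together with compactness of $S$ and finiteness of the degree of $\tilde{\pi}$. The second follows from the unramifiedness of $\pi$ over $S^{o}$, since pulling each local representation $\eta_{\alpha}$ of $\nabla$ back by a local biholomorphism produces a holomorphic $1$-form, and the cocycle condition \eqref{mer1} is preserved by this pullback, yielding a well-defined holomorphic connection on $\hat{S}^{o}$.
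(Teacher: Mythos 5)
Your proposal is correct, and it produces the same covering as the paper, but by a different route. You define $H=\ker(\arg\circ\rho)$ inside $\pi_1(S^o)$ and take the associated finite unramified cover, then complete at the punctures and pull back $\nabla$; the paper instead works entirely in \v{C}ech terms: it takes a Leray $\nabla$-atlas with transition maps $z_\beta=a_{\alpha\beta}z_\alpha+c_{\alpha\beta}$, $a_{\alpha\beta}\in G$, splits $a_{\alpha\beta}=\zeta^{l_{\alpha\beta}}|a_{\alpha\beta}|$ as in \eqref{lalfabetan}, glues $k$ relabelled copies $U_{\alpha_j}$ with coordinates $z_{\alpha_j}=\zeta^j z_\alpha$ according to the cocycle $\{l_{\alpha\beta}\}$, and takes a connected component before completing via Forster's theorem. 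A connected component of that glued space is precisely the cover associated to your $H$, so the two constructions agree. What each buys: your argument is shorter and more conceptual, but it leans on two standard facts you should at least flag — that the monodromy representation of $\pi^*\nabla$ is $\rho\circ\pi_*$ (functoriality of the class $\hat\xi\in H^1(\cdot,\mathbb{C}^*)$ under pullback, or equivalently Lemma \ref{nablaatlas} applied to the lifted atlas), and Theorem \ref{intersectionanglenonself} for assertion (2). The paper's explicit gluing hands you, for free, a Leray $\hat\nabla$-atlas whose transition multipliers lie in $\mathbb{R}^+$, from which both the identification of geodesics with Euclidean segments (assertion (1)) and $\arg\hat G=\{0\}$ (hence assertion (2)) are read off directly without invoking functoriality of the monodromy. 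Your treatment of the completion at the punctures (cyclic covers $w\mapsto w^n$ of punctured discs, filled in) is the same step the paper delegates to \cite[Theorem 8.4]{F}, and your observation that $\hat\nabla$ acquires no spurious poles on $\hat S^o$ because $\pi$ is unramified there is correct and matches the paper.
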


 \begin{proof}
 Let $k\in\mathbb{N}^*$ be the minimum integer satisfying $\arg G^k=\{0\}$.  Let $\{(U_\alpha,z_\alpha)\}$ be a Leray $\nabla$-atlas for $S^o$ with the transition functions  $z_\beta= a_{\alpha\beta} z_\alpha+c_{\alpha\beta}$ on $U_\alpha\cap U_\beta$, where $a_{\alpha\beta}\in G$ and $c_{\alpha\beta}\in\mathbb{C}$. Note that the representation of $\nabla$ on $U_\alpha$ is identically zero.

  Let $\zeta$ be a fixed primitive $k$-th root of unity. Since $a_{\alpha\beta}\in G$ there exists $l_{\alpha\beta}\in\{0,...,k-1\}$ such that
  \begin{equation}\label{lalfabetan}
    a_{\alpha\beta}=\zeta^{l_{\alpha\beta}}|a_{\alpha\beta}|.
  \end{equation}

   Take $k$ copies of $U_\alpha$ and denote them by $U_{\alpha_j}\cong U_\alpha$ for $j=0,...,k-1$.  We set $z_{\alpha_j}:= \zeta^jz_\alpha$ on $U_{\alpha_j}$.  Whenever $U_\alpha\cap U_\beta\ne \emptyset$, we glue the copies $U_{\alpha_i}$ and $U_{\beta_j}$ for the indices $$j=i-l_{\alpha\beta}\mod k.$$
  Then we have
    \begin{align*}
      z_{\beta_j}=\zeta^j z_\beta & = \zeta^j (a_{\alpha\beta} z_\alpha+c_{\alpha\beta})\\
       &  =\zeta^{j+l_{\alpha\beta}} |a_{\alpha\beta}|z_\alpha+\zeta^jc_{\alpha\beta}\\
       &=\zeta^{j+l_{\alpha\beta}-i}
        |a_{\alpha\beta}|z_{\alpha_i}+\zeta^jc_{\alpha\beta}\\
       & =|a_{\alpha\beta}|z_{\alpha_i}+\zeta^jc_{\alpha\beta}
    \end{align*}
    Hence
    $$z_{\beta_j}=a_{\alpha_i\beta_j}z_{\alpha_j}+c_{\alpha_i\beta_j}$$ for some $a_{\alpha_i\beta_j}\in\mathbb{R}^+$ and $c_{\alpha_i\beta_j}\in \mathbb{C}$. Consequently, we get  a Riemann surface $\hat{S}_1^o$. Take  a connected component of $\hat{S}_1^o$ and denote it by $\hat{S}^o$. Hence we have a covering map $\pi':\hat{S}^o\to S^o$. By Theorem 8.4 from \cite{F} there exists a compact Riemann surface $\hat{S}$ and a (possibly ramified) covering $\pi:\hat{S}\to S$ such that $\pi|_{\hat{S}^o}=\pi'$ and $\pi^{-1}(\Sigma)$ is a finite set.  Hence we have a (possibly ramified) covering $\pi: \hat{S}\to S$.

   Moreover, by the construction there exists a Leray atlas $\{(V_\gamma,w_\gamma)\}$ on $\hat{S}^o$ with the transition functions  $w_\theta= a_{\gamma\theta} w_\gamma+c_{\gamma\theta}$ on $U_\gamma\cap U_\theta$, where $a_{\gamma\theta}\in \mathbb{R}^+$ and $c_{\gamma\theta}\in\mathbb{C}$. Consequently, by setting $\hat{\eta}_\gamma\equiv0$ local 1-forms on $V_\gamma$ we can get a holomorphic connection $\hat{\nabla}$ on $\hat{S}^o$  with the local representations $\{\hat{\eta}_\gamma\}$.

 Let $\hat{\sigma}:[0,\varepsilon)\to \hat{S}^o$ be a smooth curve and  $\sigma:=\pi\circ\hat{\sigma}$. Let $(V_\gamma,w_\gamma)$ be a chart with $V_\gamma\cap\mathrm{supp}(\hat{\sigma})\ne\emptyset$. Then there exists a chart $(U_\alpha,z_\alpha)$ and an integer $j$ such that $V_\gamma\cong U_\alpha$ and $w_\gamma=\zeta^j z_\alpha$. We can see that $z_\alpha(\sigma(t))$ is a Euclidean segment if and only if $w_\gamma(\hat{\sigma}(t))$ is a Euclidean segment. Hence $\sigma$ is a geodesic for $\nabla$ if and only if $\hat{\sigma}(t)$ is a geodesic for $\hat{\nabla}$.

 Let $\hat{G}$ be the monodromy group of $\hat{\nabla}$ on $\hat{S}^o$. Since the atlas $\{(V_\gamma,w_\gamma)\}$ has the transition functions  $w_\theta= a_{\gamma\theta} w_\gamma+c_{\gamma\theta}$ on $U_\gamma\cap U_\theta$, where $a_{\gamma\theta}\in \mathbb{R}^+$ and $c_{\gamma\theta}\in\mathbb{C}$, we can see that $\arg\hat{G}=\{0\}$. Hence any self-intersecting geodesic of $\hat\nabla$ is closed.

 \end{proof}
 \begin{definition}
   The covering $\pi: \hat{S}\to S$ from the last theorem is said to be \emph{the canonical cover induced by $\nabla$.}
 \end{definition}
 The next lemma shows a relation between the $\omega$-limit sets  of geodesics $\nabla$ and $\hat{\nabla}$.
\begin{lemma}\label{pi(w)=W}
Let $\nabla$ be a meromorphic connection on a compact Riemann surface $S$ and $\Sigma$ the set of poles of $\nabla$. Set $S^o=S\setminus\Sigma$.  Let  $\sigma:[0,\varepsilon)\to S^o$ be a maximal geodesic of $\nabla$. Assume $\nabla$ has monodromy in $G$ with $\arg G^k=\{0\}$ for some $k\in \mathbb{N}^*$. Let $\pi:\hat{S}\to S$ be a canonical cover of $S$ induced by $\nabla$. Let $\hat{\sigma}$ be a lift of $\sigma$. Let $\hat{W}$ be the $\omega$-limit set of $\hat{\sigma}$. Then $\pi(\hat{W})$ is the $\omega$-limit set of ${\sigma}$.
\end{lemma}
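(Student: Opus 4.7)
The plan is to prove the two inclusions $\pi(\hat{W}) \subseteq W$ and $W \subseteq \pi(\hat{W})$ separately. The forward inclusion will follow immediately from the continuity of $\pi$, whereas the reverse inclusion will use the compactness of $\hat{S}$ provided by Theorem \ref{canonicalcover} together with basic covering space theory to extract convergent subsequences of lifts.

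First I would justify that $\hat{\sigma}$ really is defined on the full interval $[0,\varepsilon)$. Since $\sigma:[0,\varepsilon)\to S^o$ stays in $S^o$ by hypothesis, and since $\pi\colon\hat{S}^o\to S^o$ is an unramified covering (the ramification of $\pi\colon\hat{S}\to S$ occurs only over $\Sigma$), the lift $\hat{\sigma}$ of $\sigma$ through $\pi$ is defined on all of $[0,\varepsilon)$ and takes values in $\hat{S}^o$. In particular, $\sigma=\pi\circ\hat{\sigma}$ on $[0,\varepsilon)$.

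For $\pi(\hat{W})\subseteq W$: let $\hat{p}\in\hat{W}$, so there is a sequence $t_n\nearrow\varepsilon$ with $\hat{\sigma}(t_n)\to\hat{p}$ in $\hat{S}$. By continuity of $\pi$, we get $\sigma(t_n)=\pi(\hat{\sigma}(t_n))\to \pi(\hat{p})$, so $\pi(\hat{p})\in W$.

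For $W\subseteq\pi(\hat{W})$: let $p\in W$, and choose $t_n\nearrow\varepsilon$ with $\sigma(t_n)\to p$. Here is where compactness does the work: by Theorem \ref{canonicalcover} the covering Riemann surface $\hat{S}$ is compact, so the sequence $\{\hat{\sigma}(t_n)\}\subset\hat{S}$ admits a convergent subsequence $\hat{\sigma}(t_{n_k})\to\hat{p}\in\hat{S}$. Since $t_{n_k}\nearrow\varepsilon$, by definition $\hat{p}\in\hat{W}$. Applying $\pi$ we obtain $\pi(\hat{p})=\lim_k\pi(\hat{\sigma}(t_{n_k}))=\lim_k\sigma(t_{n_k})=p$, so $p\in\pi(\hat{W})$.

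No step looks hard; the only point that requires minor care is ensuring the lift $\hat{\sigma}$ is genuinely defined on the whole parameter interval so that $\hat{W}$ refers to a genuine $\omega$-limit set (this is why one must use the unramified covering $\hat{S}^o\to S^o$, not the ramified extension). The rest is a standard continuity-plus-compactness argument.
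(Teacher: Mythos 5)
Your proof is correct and follows essentially the same route as the paper: the forward inclusion by continuity of $\pi$, and the reverse inclusion by extracting a convergent subsequence of $\{\hat{\sigma}(t_n)\}$ using the compactness of $\hat{S}$ and noting that its image under $\pi$ must converge to $p$. Your added remark that the lift is defined on all of $[0,\varepsilon)$ because $\pi$ is unramified over $S^o$ is a small point of care the paper leaves implicit.
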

\begin{proof}
  Let $W$ be the $\omega$-limit set of $\sigma$. It is not difficult to see that $\pi(\hat{W})\subseteq W$.  Let $p\in W$. Then there exists a sequence of positive numbers $t_n\uparrow \varepsilon$ such that $\lim\limits_{n\to\infty}\sigma(t_n)=p$. Take the sequence $\{\hat{\sigma}(t_n)\}$.  Since $\hat{S}$ is compact  $\{\hat{\sigma}(t_n)\}$ has  a limit set $A\subset W$. It is enough to show that there exists $\hat{p}\in A$ such that $\pi(\hat{p})=p$. Since $A$ is not empty there exists a point $\hat{q}\in A$. Let $\pi(\hat{q})=q$ for some $q\in S$. Then there exists a subsequence $\{\hat{\sigma}(t_{n_l})\}_{l=1}^\infty$ such that $\lim\limits_{l\to\infty}\sigma(t_{n_l})=q$. Since the sequence $\{{\sigma}(t_n)\}$ has single limit point we can see that $q=p$. We are done.

\end{proof}
By using Lemma \ref{pi(w)=W} and Theorem \ref{canonicalcover} we prove our second main result.

\begin{proof}[{Proof Theorem \ref{abateconiK}}]
 Since $\sigma$ is a self-intersecting geodesic  the $\omega$-limit set of ${\sigma}$ can not be a single pole.  Let $\pi:\hat{S}\to S$ be the canonical cover induced by $\nabla$ and $\hat{\nabla}=\pi^*\nabla$. Let $\hat{\sigma}$ be a geodesic for $\hat{\nabla}$ such that $\pi\circ\hat{\sigma}=\sigma$. Thanks to Theorem \ref{canonicalcover} we can see that $\hat{\sigma}$ does not intersect itself. Consequently, by using Theorem \ref{t3} we have a list of possible $\omega$-limit sets of the corresponding lift geodesic $\hat{\sigma}$. Let $\hat{W}$ be the $\omega$-limit set of $\hat{\sigma}$. Then Lemma \ref{pi(w)=W} implies $\pi(\hat{W})=W$.

 Let first $\hat{W}$ be the support of a closed geodesic. Since $\pi$ sends a geodesic of $\hat{\nabla}$ to a geodesic of $\nabla$ we have  $W$ the support of a (possibly non-simple) closed geodesic.

 Let now $\hat{W}$ be a graph of saddle connections. By the same argument as above we can see that $W$ is a graph of (possibly self-intersecting) saddle connections.

 Finally, if $\hat{W}$ has non-empty interior then $W$ has also non-empty interior. Assume  $W$ has non-empty boundary. Then each component of the boundary of $\hat{W}$ is a graph of saddle connections with no spikes and at least one pole.  Since $\pi$ is a ramified cover, each component of $\partial W$ is a graph of geodesics (in general  $\partial W$ may not be a graph of saddle connections). Since each component of the boundary of $\hat{W}$ is a graph of saddle connections  every geodesic segment of  $\partial W$ is a part of a saddle connections. We are done.

\end{proof}

\end{document}